\newtheorem{thm}{Theorem}[section]
\newtheorem{cor}[thm]{Corollary}
\newtheorem{prop}[thm]{Proposition}
\theoremstyle{definition}
\newtheorem{rem}[thm]{Remark}
\newtheorem{ex}[thm]{Example}
\newcommand{\R}{\mathbb{R}}
\DeclareMathOperator{\nor}{nor}
\newcommand{\eps}{\varepsilon}
\newcommand{\G}{\mathcal{G}}
\DeclareMathOperator{\SO}{SO}
\DeclareMathOperator{\sgn}{sgn}
\newcommand{\di}{\mathrm{d}}
\newcommand{\Ha}{\mathcal{H}}
\newcommand{\Li}{\mathcal{L}}
\begin{document}

\title{Rotational Crofton formulae for Minkowski tensors \\
and some affine counterparts}

\author{Anne Marie Svane and Eva B. Vedel~Jensen}


\maketitle

\begin{abstract}
Motivated by applications in local stereology, a new rotational  
Crofton formula is derived for Minkowski tensors. For sets of positive reach, the formula shows how rotational averages of intrinsically defined Minkowski tensors on sections passing through the origin are related to the geometry of the sectioned set. In particular, for Minkowski tensors of order $j-1$ on $j$-dimensional linear subspaces, we derive an explicit formula for the rotational average involving hypergeometric functions. Sectioning with lines and  hyperplanes through the origin is considered in detail. We also study the case where the sections are not restricted to pass through the origin. For sets of positive reach, we here obtain  a Crofton formula for the integral mean of intrinsically defined Minkowski tensors on $j$-dimensional affine subspaces.
\end{abstract}

\section{Introduction}

In local stereology, rotational averages of measurements on sections through fixed points are considered \cite{jensen}. Local stereology is applied in optical microscopy  which allows virtual sections to be generated through reference points in a tissue block. A typical example is optical sectioning through the nucleus of a biological cell. A technical advantage of such sectioning in biological material is that the boundary of a central section is often much more clearly visible than the boundary of a peripheral section. Local stereology is by now recognized as being a very powerful tool in biomedicine, especially in neuroscience and cancer grading. 

Motivated by applications in local stereology, we study in the present paper functionals $\Psi$ defined on a set of positive reach $ X$ by
\begin{equation}
\Psi(X)=\int_{\mathcal{L}_j^d}\Psi_L(X\cap L)\,\di L,
\label{rotationformula}
\end{equation}
where $\mathcal{L}_j^d$ is the space of all $j$-dimensional linear subspaces in $\mathbb{R}^d$, $\Psi_L$ is a functional on the sets $X\cap L$ and $\mathrm{d}L$ is the element of the rotation invariant measure on $\mathcal{L}_j^d$. Such functionals arise in local stereology where $\Psi_L(X\cap L)$ is observed on a random section $L$, distributed according to a normalized version of the rotation invariant measure. Then, \eqref{rotationformula} implies that the mean of $\Psi_L(X\cap L)$ is, up to a known constant, equal to $\Psi(X)$.   

For the fundamental case where $\Psi_L$ is one of the intrinsic volumes, an explicit expression for $\Psi(X)$ was determined under weak regularity conditions in \cite{evarataj}, see also \cite{auneau}. In particular, explicit expressions for $\Psi(X)$ are available in the case where $\Psi_L$ is volume and surface area in $L$. In the present paper, we will generalize this result by applying \eqref{rotationformula} to a general class of functionals $\Psi_L$ that contains Minkowski tensors on $L$ of arbitrary rank. The intrinsic volumes are Minkowski tensors of rank 0. Minkowski tensors of rank 1 or higher contain important information about position, shape and orientation. The particular case where $\Psi_L$ is a volume tensor was treated in \cite[(10) and Proposition 5.3]{jeremy}.  In the present paper, we treat explicitly the case of surface tensors. 

We also study functionals of the form
\begin{equation}
\Psi(X)=\int_{\mathcal{E}_j^d}\Psi_E(X\cap E)\,\mathrm{d}E,
\label{motionformula}
\end{equation}
where $\mathcal{E}_j^d$ is the space of all $j$-dimensional affine subspaces in $\mathbb{R}^d$, $\Psi_E$ is a functional on the sets $X\cap E$ and $\mathrm{d}E$ is the element of the motion invariant measure on $\mathcal{E}_j^d$.

Functionals of the form (\ref{motionformula}) are well studied in the literature. In the case where $\Psi_E$ is an intrinsic volume, $\Psi$ is again an intrinsic volume. This is the classical Crofton formula. The case of Minkowski tensors has been treated in \cite{schuster}  while very general formulae are derived in \cite{rataj}. Recently, Hug and Weis \cite{weis} have studied the case where $\Psi_E(X\cap E)$ is substituted by a tensor-valued measure.  

In the present paper, we consider the case where $\Psi_E$ is an arbitrary Minkowski tensor. Using the basic theorems in \cite{rataj}, we show for an arbitrary  set $X$ of positive reach that the functional $\Psi$ in \eqref{motionformula} is a linear combination of Minkowski tensors. The same formulae are obtained for total measures in the Crofton formulae for tensor-valued measures, derived for convex bodies in \cite{weis}. 

The paper is structured as follows. In Section \ref{notation}, definitions and basic notation used for Grassmann manifolds, generalized curvature measures, Minkowski tensors, and hypergeometric functions are shortly summarized. The rotational integral formulae of the type \eqref{rotationformula} are derived in Section \ref{rotationsec}, while some affine counterparts of the type \eqref{motionformula} may be found in Section \ref{affinesec}. Proofs are deferred to an Appendix.

\section{Notation and definitions}\label{notation}
We first introduce some relevant notation and definitions that we are going to use throughout the paper. 

\subsection{Grassmann manifolds}
Let $\Li_{j}^d$ denote the Grassmannian consisting of $j$-dimensional linear subspaces of $\R^d$. The measure we consider on $\Li_{j}^d$ is the rotation invariant measure, which is unique up to a constant. More specifically, the measure is the $j(d-j)$-dimensional Hausdorff measure  on $\Li_{j}^d$ considered as a subspace of the vector space $\bigwedge_j \R^d$ by identifying $L\in \Li_{j}^d$ with $v_1\wedge \dotsm \wedge v_j$ for any oriented orthonormal basis $v_1,\dots,v_j$ spanning $L$, see \cite[Chapter 1]{federer}. This measure has total  measure given by 
\begin{equation*}
c_{d,j}:=\Ha^{j(d-j)}(\Li_{j}^d) = \frac{\sigma_d \dotsm \sigma_{d-j+1}}{\sigma_j \dotsm \sigma_1},
\end{equation*}
see \cite[3.2.28]{federer}. Here, $\Ha^m$ denotes the $m$-dimensional Hausdorff measure and 
\begin{equation*}
\sigma_k = 2\pi^{{k}/{2}}/\Gamma({k}/{2})=\Ha^{k-1}(S^{k-1})
\end{equation*}
 is the surface area of the $(k-1)$-dimensional sphere. By convention $c_{d,0}=1$.

For $L \in \Li_{j-1}^d$ and $x\notin L$, we let $L^x\in \Li_j^d$ denote the linear subspace spanned by $L$ and $x$. We let $p(x| L)$ be the orthogonal projection of $x$ onto $L$ and $\pi(x| L)=p(x| L)/|p(x| L)|\in S^{d-1}$ its normalization. Similarly, if $v$ is a vector, we write $p(x|v)$ for the projection of $x$ onto the line through the origin spanned by  $v$ and $\pi(x|v)$ for its normalization. If $L\subseteq \R^d$ is a linear subspace of $\R^d$ of dimension larger than $j$, then $\Li_j^L$ denotes the space of $j$-dimensional linear subspaces of $L$. If $v\in \R^d$ is a non-zero vector, then $\Li_j^v$ denotes the set of $j$-dimensional linear subspaces of $\R^d$ containing $v$.

Given two subspaces $L_j\in \Li_j^d$ and $L_k\in \Li_k^d$, we define the generalized sine function $\G(L_j, L_k)$ as follows. An orthonormal basis for $L_j\cap L_k$ is extended to an orthonormal basis for $L_j$ and one for $L_k$. Then, $\G(L_j, L_k)$ is the volume of the parallelepiped spanned by all these vectors.
In particular, $\G(L_{d-1}, L_k)=|p(n| L_k)|$, where $n$ is a unit normal of $L_{d-1}$.

Let $v\in \R^d$ and assume $v\neq 0$. Consider the function 
\begin{equation*}
h: \Li_j^d \backslash \{L\in \Li_j^d \mid v\perp L\}  \to S^{d-1}
\end{equation*}
mapping $L$ to $\pi(v | L)$.
Then, the $(d-1)$-Jacobian (see \cite{federer}) was computed for $L$ with $v\notin L$ and $v\notin L^\perp $ in \cite[Lemma 4.2]{rataj} to be
\begin{equation*}
J_{d-1}h(L)= \bigg(\frac{|p(v | L^\perp)|}{|p(v | L)|} \bigg)^{j-1}.
\end{equation*}
This allows us to apply the coarea formula to a bounded measurable function $f : \Li_j^d \to \R$ as follows
\begin{equation}\label{coarea}
\int_{\Li_j^d} f(L)\, \di L =  \int_{S^{d-1}}\mathds{1}_{\{\langle u,v \rangle > 0\}} \bigg(\frac{1-\langle u,v \rangle^2}{\langle v, u \rangle^2 } \bigg)^{\frac{1-j}{2}} \int_{\Li_{j-1}^{v^\perp \cap u^\perp}} f(M^u)\, \di M\, \di u.
\end{equation}
Here, $\di L$ denotes the element of the rotation invariant measure on $\Li_j^d$ while $\di u$ is the element of the $(d-1)$-dimensional Hausdorff measure on $S^{d-1}$.
We will also make repeated use of the following integration formula, see e.g.\ \cite[Lemma 1.3.1]{groemer},
\begin{equation}\label{sphereform}
\int_{S^{d-1}} f(u)\, \di u = \int_{S^{d-1}\cap v^\perp} \int_{-1}^1 f\big(tv+\sqrt{1-t^2}w\big)(1-t^2)^{\frac{d-3}{2}}\,\di t\, \di w,
\end{equation}
where $v\in S^{d-1}$ is any unit vector.

Finally, we let $\mathcal{E}_j^d$ denote the affine Grassmannian consisting of $j$-dimensional affine subspaces of $\R^d$. The element of the motion invariant measure on $\mathcal{E}_j^d$ is denoted by $\di E$ where for $E=L+y$ with $y\in L^\perp$ we have $\di E = \di y \, \di L$.
If $L\subseteq \R^d$ is a $j$-dimensional linear subspace, we denote by $S^{j-1}(L)$ the unit sphere in  $L$ and $\Sigma^L = L\times S^{j-1}(L)$. Similarly, if $E=L+y$ is an affine subspace, we write $S^{j-1}(E)=S^{j-1}(L)$ and $\Sigma^E=E\times S^{j-1}(E)$. 

\subsection{Generalized curvature measures}
The reach of a closed set $X\subseteq \R^d$ is the supremum of all $R$ satisfying that every point $x$ at distance less than $R$ from $X$ has a unique closest point in $X$. We denote this closest point by $p_X(x)\in X$. 
The space of non-empty compact sets in $\R^d$ (resp. $E\in \mathcal{E}_j^d$) having positive reach will be denoted by  $\mathcal{PR}^d$ (resp. $\mathcal{PR}(E)$). Similarly, let $\mathcal{K}^d$ (resp. $\mathcal{K}(E)$) denote the set of non-empty compact convex subsets of $\R^d$ (resp. $E\in \mathcal{E}_j^d$).  All convex sets have infinite reach, so $\mathcal{K}^d \subseteq \mathcal{PR}^d$.

For $X\in \mathcal{PR}^d$, the generalized curvature measures $\Lambda_k(X;\cdot)$, $k=0,\dots,d-1$, are measures on $\Sigma$. They were introduced for sets of positive reach in \cite{zahle}, see also \cite{schneider} in the case of convex sets, and they satisfy the following local Steiner formula
\begin{equation*}
\Ha^d\Big(x\in \R^d \mid 0 < d(x,X)<\eps, \Big(p_X(x),\tfrac{x-p_X(x)}{|x-p_X(x)|}\Big) \in A\Big) = \sum_{k=0}^{d-1} \eps^{d-k} \kappa_{d-k} \Lambda_k(X; A),
\end{equation*}
for any Borel set $A\subseteq \Sigma$ and $\eps$ smaller than the reach of $X$. The so-called intrinsic volumes are obtained as $V_k(X)=\Lambda_k(X;\Sigma)$, $k=0,\dots,d-1$.

The generalized curvature measures can be described explicitly as follows. For $X\in \mathcal{PR}^d$, the unit normal bundle $\nor X$ of $X$ is the set of support elements, i.e.\ the set of pairs $(x,n)$ for which $x$ is a boundary point of $X$ and $n$ is an outer unit normal of $X$ at $x$. More specifically,
\begin{equation*}
\nor X = \big\{\big(x, \tfrac{y-x}{|y-x|} \big) \in \R^d \times S^{d-1} \mid y\notin X,\, p_X(y)=x  \big\}.
\end{equation*}
This is a $(d-1)$-rectifiable set. A basis for the tangent space of $\nor X$ at $(x,n)$ is given by the vectors
\begin{equation}\label{awedge}
 \Big( \tfrac{1}{\sqrt{1+\kappa_i(x,n)^2}} a_i(x,n), \tfrac{\kappa_i(x,n)}{\sqrt{1+\kappa_i(x,n)^2}} a_i(x,n)\Big), \qquad i=1,\dots,d-1,
\end{equation}
where $a_i(x,n)$ are the principal directions at $(x,n)$ corresponding to the principal curvatures $\kappa_i(x,n)$, $i=1,\dots,d-1$. 
Integration of a locally bounded measurable function $\psi: \Sigma \to \R$ with respect to $\Lambda_k(X;\cdot)$ is then given by
\begin{align}\label{kappa}
\int_{\Sigma} \psi(x{}&,n)\, \Lambda_k (X;\di (x,n) ) \\
{}&= \frac{1}{\sigma_{d-k}}\int_{\nor X} \psi(x,n) \sum_{|I|=d-k-1}  \frac{\prod_{i\in I} \kappa_i(x,n)}{\prod_i \sqrt{1+\kappa_i(x,n)^2}}\, \Ha^{d-1}(\di (x,n)).\nonumber
\end{align}

If $X\in \mathcal{PR}(E)$ for some $E\in \mathcal{E}_j^d$, there are also generalized curvature measures relative to $E$, denoted $\Lambda_k^E(X;\cdot)$. These are measures on $\Sigma^E$ satisfying the analogue of the local Steiner formula in $E$
\begin{equation*}
\Ha^j\Big(x\in E \mid 0 < d(x,X)<\eps, \Big(p_X(x),\tfrac{x-p_X(x)}{|x-p_X(x)|}\Big) \in A\Big) = \sum_{k=0}^{j-1} \eps^{j-k} \kappa_{j-k}\, \Lambda_k^E(X; A),
\end{equation*}
where $A\subseteq \Sigma^E$ is a Borel set and $\eps$ is smaller than the reach of $X$.

We are going to consider families of valuations $\Psi^\psi_{k,E}$ on $\mathcal{PR}(E)$ of the following form. For $X \in \mathcal{PR}(E)$, 
\begin{align}\label{psiE}
\Psi^\psi_{k,E}(X)= {}& \int_{\Sigma^E} \psi(E,x,n)\, \Lambda_k^E(X;\di (x,n)),
\end{align}
where $\psi: \mathcal{U}_j^d \to \R$ is a  function on
\begin{equation}\label{Uaff}
\mathcal{U}_j^d =\big\{(E,x,n)\in \mathcal{E}_j^d \times \R^{d} \times  S^{d-1} \mid (x,n) \in \Sigma^E\big\}.
\end{equation}
To ensure integrability, we assume that $\psi$ is measurable and locally bounded.

\subsection{Minkowski tensors}\label{minkowskisec}
We are particularly interested in a special case of \eqref{psiE}, known as the Minkowski tensors. To define these, we let $\mathbb{T}^p$ be the vector space of symmetric tensors of rank $p\in \mathbb{N}_0$ on $\R^d$. The volume tensors are defined for $X\in \mathcal{PR}^d$ and $p\in \mathbb{N}_0 $ by
\begin{equation*}
\Phi_{d}^{p,0}(X) = \int_X x^p\, \di x \in \mathbb{T}^p,
\end{equation*}
where $x^p$ is the tensor product of $p$ copies of $x$. The integration is to be understood coordinatewise. The integral geometry of volume tensors is well understood \cite{jeremy,schuster,ali,jens}, so this paper will focus on the remaining Minkowski tensors. These are defined for $r,s\in \mathbb{N}_0$ and $0\leq k \leq d-1$ as follows
\begin{equation}\label{defsurftens}
\Phi_{k}^{r,s}(X) =  \frac{\sigma_{d-k}}{r!s!\sigma_{d-k+s}} \int_{\Sigma} x^r n^s\, \Lambda_k(X;\di (x,n)) \in \mathbb{T}^{r+s},
\end{equation}
where $x^r n^s$ denotes the symmetric tensor product of $r$ copies of $x$ and $s$ copies of $n$. The tensors in \eqref{defsurftens} are sometimes called surface tensors. Using \eqref{kappa} coordinatewise, we also have 
\begin{equation*}
\Phi_k^{r,s}(X) = \frac{1}{r!s!\sigma_{d-k+s}}\int_{\nor X} x^r n^s \sum_{|I|=d-k-1}  \frac{\prod_{i\in I} \kappa_i(x,n)}{\prod_i \sqrt{1+\kappa_i(x,n)^2}}\, \Ha^{d-1}(\di (x,n)).
\end{equation*}
If $X\in \mathcal{PR}(E)$, we can replace $\Sigma$ and $\Lambda_k(X;\cdot)$ by  $\Sigma^E$ and $\Lambda_k^E(X;\cdot)$ in \eqref{defsurftens}. The resulting tensors are thus defined relative to $E$ (i.e.\ intrinsically defined) and are denoted by $\Phi_{k,E}^{r,s}(X)$. 

In the literature, Minkowski tensors are usually only considered for $X\in \mathcal{K}^d$, but since both the definition and the results of this paper hold for sets of positive reach, satisfying  mild regularity conditions, we will be working in this generality.

We let $Q\in \mathbb{T}^2$ denote the metric tensor $Q=\sum_{i=1}^d v_i^2$, where $v_1,\dots,v_d$ is an orthonormal basis of $\R^d$. Similarly, for $L\in \Li_j^d$ we define the metric tensor on $L$ by $Q(L)=\sum_{i=1}^j w_i^2$, where $w_1,\dots,w_j$ is any orthonormal basis for $L$.

The Minkowski tensors, considered as functionals on $\mathcal{K}^d$ with values in $\mathbb{T}^{r+s}$, have the following properties:
\begin{itemize}
\item[(i)] Continuity with respect to the Hausdorff metric on $\mathcal{K}^d$.
\item[(ii)] Isometry covariance: $\Phi_{k}^{r,s}(X+t) = \sum_l \varphi_{r+s-l}(X)t^l$ for all $t\in \R^d$, and $\Phi_{k}^{r,s}(\theta X) =\theta \Phi_{k}^{r,s}(X) $ for any rotation $\theta\in \SO(d)$ (see \cite{hugsch} for details).  
\item[(iii)] Valuation property: If $X_1,X_2, X_1\cup X_2\in \mathcal{K}^d$, then
\begin{equation*}
\Phi_{k}^{r,s}(X_1) + \Phi_{k}^{r,s}(X_2)= \Phi_{k}^{r,s}(X_1\cup X_2) +  \Phi_{k}^{r,s}(X_1\cap X_2). 
\end{equation*}
\end{itemize}
 According to Alesker's classification theorem \cite{alesker}, all tensor-valued functionals with the properties (i)--(iii) are linear combinations of the tensors $Q^l\Phi_{k}^{r,s}(X)$. 

The Minkowski tensors can be viewed as the total measures of the tensor valued measures given for $X\in \mathcal{PR}^d$ on  a Borel set $A\subseteq \Sigma$ as follows
\begin{equation*}
\Phi_{k}^{r,s}(X;A) =  \frac{\sigma_{d-k}}{r!s!\sigma_{d-k+s}} \int_{\Sigma}\mathds{1}_{\{(x,n)\in A\}} x^r n^s\, \Lambda_k(X;\di (x,n)).
\end{equation*}
These measures are called the local Minkowski tensors.
In the classification of local tensor valuations on $\mathcal{K}^d$, some new tensor measures $\Phi_{k}^{r,s,1}$ with very similar properties were discovered \cite{hugschneider,hugsch}. These are the so-called generalized local Minkowski tensors given by (\cite[(2.38)]{hugsch})
\begin{align*}
\Phi_{k}^{r,s,1}(X;A)= \frac{1}{r!s!\sigma_{d-k+s}}\int_{\nor X\cap A} x^rn^s \sum_{|I|= d-k-1} \frac{\prod_{i\in I}\kappa_i}{\prod_i \sqrt{1+\kappa_i^2}}\sum_{i\notin I} a_i(x,n)^2\, \Ha^{d-1}(\di (x,n))
\end{align*}
for $k\in \{1,\dots,d-1\}$, $r,s\geq 0$, and $A\subseteq \Sigma$ a Borel set. We let $\Phi_{k}^{r,s,1}(X)=\Phi_{k}^{r,s,1}(X;\Sigma)$. 
Although the local Minkowski tensors $Q^l\Phi_{k}^{r,s}$, $2l+r+2=p$, and the generalized local Minkowski tensors $Q^l\Phi_{k}^{r,s,1}$, $2l+r+s+2=p$, are linearly independent, there are linear dependences between their total measures, as the following proposition shows.
\begin{prop}\label{linkombprop}
Let $X\in \mathcal{PR}^d$, $r\geq 0$, and $s\geq 2$. Then 
\begin{equation}\label{trivialcase}
\Phi_{d-1}^{r,s-2,1}(X) = Q\Phi_{d-1}^{r,s-2}(X) - 2\pi s  \Phi_{d-1}^{r,s}(X).
\end{equation}
For $1\leq k\leq d-2$,
\begin{equation}\label{linkomb}
\Phi_{k}^{r,s-2,1}(X) = \sum_{l=0}^{s-1}  2\pi (s-1-l)\Phi_{k-l-1}^{r+l+1,s-l-1}(X) - Q \sum_{l=0}^{s-3} \Phi_{k-l-1}^{r+l+1,s-l-3}(X) 
\end{equation}
and 
\begin{equation}\label{linkomb2}
\Phi_{k}^{r,s-2,1}(X) = Q\sum_{l=0}^{r} \Phi_{k+l}^{r-l,s-2+l}(X)-  \sum_{l=0}^{r}  2\pi (s+l)\Phi_{k+l}^{r-l,s+l}(X). 
\end{equation}
In particular, 
\begin{equation}\label{r0}
\Phi_{k}^{0,s-2,1}(X) = Q \Phi_{k}^{0,s-2}(X) - 2\pi  s\Phi_{k}^{0,s}(X). 
\end{equation}
\end{prop}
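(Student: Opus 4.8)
I would split the statement into an elementary part, namely \eqref{trivialcase}, and a genuinely global part, namely \eqref{linkomb} and \eqref{linkomb2} — of which \eqref{r0} is just the case $r=0$ of \eqref{linkomb2}. For \eqref{trivialcase} the argument is pointwise on $\nor X$: when $k=d-1$ the only index set occurring in the defining integral of $\Phi_{d-1}^{r,s-2,1}(X)$ is $I=\emptyset$, and since $a_1(x,n),\dots,a_{d-1}(x,n),n$ form an orthonormal basis of $\R^d$ one has $\sum_{i=1}^{d-1}a_i(x,n)^2=Q-n^2$ at every $(x,n)\in\nor X$. Substituting this, the $Q$-summand is literally $Q\,\Phi_{d-1}^{r,s-2}(X)$, while the $n^2$-summand equals $-\frac{s!}{(s-2)!}\frac{\sigma_{s+1}}{\sigma_{s-1}}\Phi_{d-1}^{r,s}(X)=-2\pi s\,\Phi_{d-1}^{r,s}(X)$ by the identity $\sigma_{m+1}/\sigma_{m-1}=2\pi/(m-1)$, i.e.\ $\Gamma(t+1)=t\Gamma(t)$.

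For \eqref{linkomb} and \eqref{linkomb2} the plan is an integration-by-parts argument over the normal cycle. Using \eqref{kappa} coordinatewise, rewrite every tensor as an integral over $\nor X$ and decompose $\sum_{i\notin I}a_i(x,n)^2=(Q-n^2)-\sum_{i\in I}a_i(x,n)^2$; the $(Q-n^2)$-part yields $Q$-multiples of ordinary surface tensors $\Phi_k^{r,s'}(X)$, so the substance of the proposition is an identity expressing $\int_{\nor X}x^rn^{s-2}\sum_{|I|=d-k-1}\big(\prod_{i\in I}\tfrac{\kappa_i}{\sqrt{1+\kappa_i^2}}\big)\big(\prod_{j\notin I}\tfrac{1}{\sqrt{1+\kappa_j^2}}\big)\sum_{i\in I}a_i(x,n)^2\,\Ha^{d-1}$ through the Minkowski tensors on the right-hand side. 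Since this integrand is not a linear combination of the integrands of those tensors, the claim is global, and I would get it from Stokes' theorem: for $X\in\mathcal{PR}^d$ the normal cycle $N_X$ is a closed $(d-1)$-dimensional integral current in $\R^d\times S^{d-1}$, i.e.\ $\partial N_X=0$, representable by integration over $\nor X$ against the unit $(d-1)$-vector field spanned by the vectors in \eqref{awedge} (see \cite{zahle,rataj}); hence $N_X(\di\omega)=(\partial N_X)(\omega)=0$ for every compactly supported smooth $\mathbb{T}^{r+s}$-valued $(d-2)$-form $\omega$ on $\R^d\times S^{d-1}$. Taking $\omega$ to be a suitable constant times $x^{r'}n^{s'}$ with the position vector $x$ (respectively the normal $n$) wedged into the appropriate Lipschitz--Killing $(d-2)$-form, and letting $(r',s')$ range over the relevant values, the relation $N_X(\di\omega)=0$ expands — through the structure equations on $\nor X$ (the $a_i(x,n)$ being eigendirections of the relevant shape operator) and the binomial expansion of $\di(x^{r'}n^{s'})$ — into exactly \eqref{linkomb} and \eqref{linkomb2}: the sum over $l$ arises from the elementary-symmetric terms produced by $\di$ of the curvature form together with the transfer of one factor $a_i(x,n)^2$ into a power of $x$ or of $n$, the tensor $Q$ from $\sum_{i=1}^{d-1}a_i(x,n)^2=Q-n^2$, and the coefficients $2\pi(s-1-l)$ and $2\pi(s+l)$ from $\sigma_{m+1}/\sigma_{m-1}=2\pi/(m-1)$ combined with the factorial normalisations. (Equivalently, one may prove a single-step version of \eqref{linkomb}, reducing $(k,s,r)$ to $(k-1,s-1,r+1)$, and iterate, the recursion terminating once the normal index reaches $-1$, where the tensors vanish by convention.)

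The step I expect to be the main obstacle is precisely this exterior-derivative bookkeeping: writing $\di\omega$ on $\nor X$ in the frame \eqref{awedge} and checking that the two families of forms $\omega$ reproduce the stated coefficients requires careful tracking of the binomial and elementary-symmetric factors, and the vanishing of the boundary term — the only place positivity of reach is used — rests on $\partial N_X=0$. For convex $X$ there is a shortcut: $\Phi_k^{r,s,1}$ is a continuous, isometry covariant, additive $\mathbb{T}^{r+s}$-valued valuation on $\mathcal{K}^d$, hence by Alesker's classification \cite{alesker} a linear combination of the tensors $Q^l\Phi_j^{a,b}$, whose coefficients can then be pinned down by evaluating on balls and their dilates; but extending this from $\mathcal{K}^d$ to $\mathcal{PR}^d$ would need a separate localisation argument, so I would follow the normal-cycle route throughout.
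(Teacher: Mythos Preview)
Your proposal is correct and follows essentially the same route as the paper. The paper proves \eqref{trivialcase} exactly as you do, from $\sum_i a_i^2=Q-n^2$, and then establishes the single-step recursion you mention parenthetically---namely $\Phi_k^{r,s-2,1}(X)=\mathds{1}_{\{k>1,s>2\}}\Phi_{k-1}^{r+1,s-3,1}(X)+2\pi(s-1)\Phi_{k-1}^{r+1,s-1}(X)-\mathds{1}_{\{s>2\}}Q\Phi_{k-1}^{r+1,s-3}(X)$---by writing down an explicit $(d-2)$-form $\omega_k^{r,s}$ (of the type you describe: $x^{r+1}n^{s-2}$ times a constrained Lipschitz--Killing-type form) and checking $\langle a_X,\di\omega_k^{r,s}\rangle$ pointwise after using $\SO(d)$-covariance to reduce to $n=e_d$; iterating downward in $k$ gives \eqref{linkomb}, and iterating upward (with \eqref{trivialcase} as base case) gives \eqref{linkomb2}. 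Your preliminary decomposition $\sum_{i\notin I}a_i^2=(Q-n^2)-\sum_{i\in I}a_i^2$ is not used in the paper and is not needed once one has the recursion, but it does no harm.
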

For $X\in \mathcal{K}^d$, the results in Proposition \ref{linkombprop}  were observed in \cite[Remark 4.1]{hugschneider} (referring to computations in  \cite{schuster}). In the Appendix, it is shown that the definition of $\Phi_{k}^{r,s,1}$ makes sense and Proposition \ref{linkombprop} holds more generally for sets of positive reach.

We end the discussion of tensors by defining the contraction of two tensors $T\in \mathbb{T}^{r+s}$ and $S \in \mathbb{T}^r$ as follows: If $S=v_1\odot \dotsm \odot v_r$, where $\odot$ denotes the symmetric tensor product, then the contraction $\text{Contr}(T,S)$ of $T$ and $S$ is an element of $\mathbb{T}^s$ given by
\begin{equation*}
\text{Contr}(T,v_1\odot \dotsm \odot v_r )(\cdot) = T (v_1,\dots , v_r, \cdot ),
\end{equation*}
where $T$ is identified with its dual map $(\R^d)^{r+s}\to \R$. This is extended to all $S$ by linearity.

\subsection{Hypergeometric functions}
Hypergeometric functions show up in many of the formulae below. We therefore recall some basic definitions and properties here. More information can be found in \cite{slater} or \cite{wolfram}.
The hypergeometric function ${}_pF_q$ has $p+q$ parameters $a_1,\dots,a_p,b_1,\dots, b_q\in \R$ and is given by the power series expansion 
\begin{align*}
{}_pF_q(a_1,\dots,a_p;b_1,\dots,b_q;z) = \sum_{n=0}^\infty \frac{\prod_{i=1}^p(a_i)_n }{\prod_{i=1}^q(b_i)_n}\frac{z^n}{n!}, \quad z\in \R,
\end{align*}
where the Pochhammer symbol is defined by
\begin{equation*}
(a)_n =\frac{\Gamma(a+n)}{\Gamma(a)}=a\cdot (a+1)\dotsm (a+n-1)
\end{equation*}
when $n$ is a positive integer and $(a)_0=1$.
We shall only need the case $p=q+1$. Then ${}_pF_q$ has convergence radius at least 1 and converges absolutely at $z=1$ if $\sum_{i}a_i -\sum_i b_i >0$. If some $b_i \leq 0 $ is an integer (and $b_i$ is maximal among $b_1,\dots,b_q$ with this property), then ${}_pF_q$ is undefined unless there is an integer $0\geq a_j\geq b_i$, in which case we define 
\begin{align*}
{}_pF_q(a_1,\dots,a_p;b_1,\dots,b_q;z) = \sum_{n=0}^{-a_j} \frac{\prod_1^p(a_i)_n }{\prod_i^q(b_i)_n}\frac{z^n}{n!}.
\end{align*}
(This interpretation  seems to be non-standard when $a_j=b_i$, but we include this case to simplify notation later).

The most important case is $p=2$ and $q=1$, where we have the following integral representation for $0<b<c$
\begin{equation*}
{}_2F_1(a,b;c;z) = \frac{\Gamma(c)}{\Gamma(b)\Gamma(c-b)}\int_0^1 (1-zt)^{-a} t^{b-1}(1-t)^{c-b-1}\, \di t. 
\end{equation*}
We will also need 
Gauss's hypergeometric theorem 
\begin{equation}\label{value1}
{}_2F_1(a,b;c;1) = \frac{\Gamma(c)\Gamma(c-a-b)}{\Gamma(c-a)\Gamma(c-b)},
\end{equation}
which holds whenever $c>a+b$.

\section{Rotational Crofton formulae}\label{rotationsec}
\subsection{A general rotational formula}
In this section, we consider rotational integrals of the form
\begin{equation}\label{PsiL}
\Psi(X)=\int_{\Li_{j}^d} \Psi^\psi_{k,L} (X\cap L)\, \di L, 
\end{equation}
where $X\in \mathcal{PR}^d$ is a set of positive reach, $\Psi^\psi_{k,L}$ is a functional on $\mathcal{PR}(L)$ of the form \eqref{psiE}, and $0\leq k<j<d$. 

We will restrict ourselves to the class $\widetilde{\mathcal{PR}}^d$ consisting of sets $X$ of positive reach, satisfying: 
\begin{itemize}
\item[(i)] $o \notin \partial X$. 
\item[(ii)] For almost all $L\in \mathcal{L}_d^j$, there is no $(x,n)\in \nor X$ with $x\in L$ and $n$  perpendicular to $x$. 
\end{itemize}
According to \cite[Theorem 4.10]{federer2}, the condition (ii) ensures that $X\cap L$ has positive reach for almost all $L$  and hence the integrand in \eqref{PsiL} is defined almost surely. The condition (i) is discussed in Remark \ref{boundary0} below.
The restriction to $\widetilde{\mathcal{PR}}^d$ is rather mild. It was thus shown in \cite[Proposition 1]{evarataj} that the class $\widetilde{\mathcal{PR}}^d$ contains all convex sets $X$ satisfying $o\notin \partial X$. Furthermore, if $X\in \mathcal{PR}^d$, then almost all translations of $X$ will belong to $\widetilde{\mathcal{PR}}^d$. 

Theorem \ref{rotational} below shows that the integral in \eqref{PsiL} exists for all $X\in \widetilde{\mathcal{PR}}^d$. Moreover, the theorem gives an explicit formula for $\Psi(X)$. 
In the special case where $\psi$ is a function of $x$ only, such a formula was already given in \cite[p. 558]{evarataj}. 
To state the theorem, we introduce for $(x,n)\in \nor X$ the notation $A_I(x,n)$ for the tangent subspace
\begin{equation*}
A_I(x,n) = \text{span}\{a_i(x,n),i\notin I\},
\end{equation*}
where $I\subseteq \{1,\dots,d-1\}$ and $a_i(x,n)$, $i=1,\dots,d-1$, are the principal directions. 

\begin{thm} \label{rotational}
Suppose $X\in \widetilde{\mathcal{PR}}^d $. Let $\psi: \mathcal{U}_j^d \to \R$ be a locally bounded measurable function and  $0\leq k<j<d$. Then, 
\begin{align}\label{mainrotform}
\int_{\Li_{j}^d} \Psi^\psi_{k,L} (X\cap L)\, \di L ={}& \frac{1}{\sigma_{j-k}}\int_{\nor X} \frac{1}{|x|^{d-j}}\sum_{|I|=j-1-k} \frac{\prod_{i\in I} \kappa_i(x,n)}{\prod_{i=1}^{d-1} \sqrt{1+\kappa_i(x,n)^2}} \\
&\times \int_{\Li_{j-1}^{x^\perp}} \psi\big(L^x,x,\pi\big(n | L^x\big)\big) \frac{\G\big(L^x, A_I(x,n)\big)^2}{ \big|p(n | L^x)\big|^{j-k}}\, \di L\, \Ha^{d-1}(\di (x,n)). \nonumber
\end{align}
In particular, the integral on the left hand side exists.
\end{thm}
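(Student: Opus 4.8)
\emph{Step 1: pass to the normal bundle of the section.} Since $X\in\widetilde{\mathcal{PR}}^d$, condition (ii) and \cite[Theorem 4.10]{federer2} guarantee $X\cap L\in\mathcal{PR}(L)$ for a.e.\ $L\in\Li_j^d$, so $\Psi^\psi_{k,L}(X\cap L)$ is defined a.e. Applying \eqref{kappa} inside the $j$-dimensional ambient space $L$, rewrite
\[
\Psi^\psi_{k,L}(X\cap L)=\frac{1}{\sigma_{j-k}}\int_{\nor_L(X\cap L)}\psi(L,x,n)\sum_{|J|=j-1-k}\frac{\prod_{i\in J}\kappa_i^L(x,n)}{\prod_{i=1}^{j-1}\sqrt{1+\kappa_i^L(x,n)^2}}\,\Ha^{j-1}(\di(x,n)),
\]
where $\kappa_i^L$ are the principal curvatures of $X\cap L$ relative to $L$. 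This reduces the theorem to evaluating $\int_{\Li_j^d}$ of this integral.

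\emph{Step 2: slicing identity for the normal bundle.} The key geometric input is that for a.e.\ $L\in\Li_j^d$ the map $(x,n)\mapsto(x,\pi(n|L))$ is a bijection, bi-Lipschitz off a null set, from $\nor X\cap(L\times S^{d-1})$ (a $(j-1)$-rectifiable set, since $x\in L$ is $d-j$ linear conditions on the $(d-1)$-rectifiable $\nor X$) onto $\nor_L(X\cap L)$; it is defined wherever $p(n|L)\neq 0$, and $\pi(n|L)$ supports $X\cap L$ at $x$ because $x\in L$. Differentiating, using the explicit basis \eqref{awedge} of $T_{(x,n)}\nor X$, one finds that the shape operator of $X\cap L$ at $(x,\pi(n|L))$ equals $|p(n|L)|^{-1}$ times the compression to the $(j-1)$-plane $n^\perp\cap L$ of the ambient shape operator $\sum_i\kappa_i(x,n)\,a_i(x,n)\otimes a_i(x,n)$, and that the $(j-1)$-dimensional Jacobian of the slicing map is a ratio of the Gram determinants attached to \eqref{awedge}. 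Expanding $\sum_{|J|=j-1-k}\prod_{i\in J}\kappa_i^L$ via the Cauchy--Binet formula for elementary symmetric functions, and simplifying the resulting squared projection volumes by a Grassmann-algebra complementation identity that relates $\{a_i:i\in I\}$ to $A_I(x,n)=\spa\{a_i:i\notin I\}$, turns the integrand of Step 1 (transported back to $\nor X\cap(L\times S^{d-1})$ with respect to $\Ha^{j-1}$) into
\[
\frac{1}{\sigma_{j-k}}\sum_{|I|=j-1-k}\frac{\prod_{i\in I}\kappa_i(x,n)}{\prod_{i=1}^{d-1}\sqrt{1+\kappa_i(x,n)^2}}\,\frac{\G\big(L,A_I(x,n)\big)^2}{|p(n|L)|^{j-k}}\,\psi\big(L,x,\pi(n|L)\big).
\]

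\emph{Step 3: co-area exchange over the Grassmannian.} It remains to interchange the integrations $\int_{\Li_j^d}\int_{\nor X\cap(L\times S^{d-1})}$. Disintegrate $\Ha^{d-1}$ on $\nor X$ along the slices $\nor X\cap(L\times S^{d-1})$: the incidence set $\{(L,(x,n))\in\Li_j^d\times\nor X:x\in L\}$ fibres over $\Li_j^d$ with fibre the slice and over $\nor X$ with fibre $\{L\in\Li_j^d:x\in L\}$, which is parametrized by $\Li_{j-1}^{x^\perp}$ through $M\mapsto M^x=L^x$. The co-area Jacobian comparing the two disintegrations contributes precisely the factor $|x|^{-(d-j)}$ (each of the $d-j$ directions normal to $L$ at $x$ perturbs the admissible subspaces through $x$ at rate $|x|^{-1}$), a computation of the same nature as \cite[Lemma 4.2]{rataj}. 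Hence
\[
\int_{\Li_j^d}\int_{\nor X\cap(L\times S^{d-1})}G\,\Ha^{j-1}(\di(x,n))\,\di L=\int_{\nor X}\frac{1}{|x|^{d-j}}\int_{\Li_{j-1}^{x^\perp}}G\big(L^x,x,n\big)\,\di L\,\Ha^{d-1}(\di(x,n));
\]
inserting the integrand from Step 2 (with $n$ replaced by $\pi(n|L^x)$ inside $\psi$) and collecting constants gives \eqref{mainrotform}. Existence of the left-hand side follows since every step preserves absolute integrability: $\psi$ is locally bounded and all measures involved are locally finite. For $\psi$ depending only on $x$ this recovers the formula of \cite[p.\ 558]{evarataj}, so the new content lies in tracking the $n$-dependence through Step 2.

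\emph{Main obstacle.} The delicate part is Step 2: carrying out the differential geometry of sections of a set of positive reach --- the shape-operator compression, the exact Jacobian of the slicing map, and the Grassmann complementation identity producing the weight $\G(L^x,A_I)^2$ and the exponent $j-k$ of $|p(n|L^x)|$ --- while rigorously controlling the exceptional null sets (the sets of $L$ with $p(n|L)=0$ for some $(x,n)\in\nor X$, $x\in L$, and the exclusions $o\notin\partial X$ built into $\widetilde{\mathcal{PR}}^d$) and the non-smoothness inherent in positive reach. The co-area bookkeeping isolating $|x|^{-(d-j)}$ in Step 3 is the secondary technical point.
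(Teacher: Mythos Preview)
Your plan follows a recognisably correct route, but it is organised differently from the paper and has one genuine gap. The paper does not slice $\nor X$ directly by $L\in\Li_j^d$ and then parametrise $\{L:x\in L\}$ by $\Li_{j-1}^{x^\perp}$. Instead it first parametrises $\Li_j^d$ by pairs $(L,z)\in\Li_{j-1}^d\times S^{d-j}(L^\perp)$, introduces the maps $f(x,n)=(x,\pi(n|L^x))$ and $g(x,n)=\pi(x|L^\perp)$ on $\nor X$ as in \cite{evarataj}, and works in the normal-cycle/differential-form framework: the entire content of your Step~2 is packaged in the single identity for $\langle a_X, g^\sharp\Omega_{d-j}\wedge f^\sharp\rho_k^{L^x}\rangle$ computed in \cite[Section~7]{evarataj}, rather than via a shape-operator compression and Cauchy--Binet argument. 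Only at the very end does the paper apply the coarea formula for $L\mapsto p(L|x^\perp)$ (\cite[Lemma~6]{evarataj}) to convert $\int_{\Li_{j-1}^d}$ into $\int_{\Li_{j-1}^{x^\perp}}$ and produce the factor $|x|^{-(d-j)}$. Your direct approach is geometrically cleaner but must re-derive ab initio what the paper imports from \cite{evarataj}; the paper's route buys those computations off the shelf.

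The real gap is your integrability claim. Saying ``every step preserves absolute integrability: $\psi$ is locally bounded and all measures involved are locally finite'' is not enough, because the integrand on the right of \eqref{mainrotform} carries the singular weight $|p(n|L^x)|^{-(j-k)}$, unbounded whenever $n$ becomes nearly orthogonal to $L^x$. The paper deals with this explicitly: it uses $\G(L^x,A_I)\le |p(n|L^x)|$ to reduce the exponent to $j-k-2$, then shows that $\int_{\Li_{j-1}^{x^\perp}}|p(n|L^x)|^{2-j+k}\,\di L$ is finite and bounded uniformly on $\nor X$ by an explicit hypergeometric-type estimate, invoking condition~(i) ($o\notin\partial X$) to bound $|x|$ away from zero. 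Without this estimate Fubini is unjustified and even the existence assertion in the theorem is unproven. You should expect to need exactly this bound regardless of how you organise Step~2.
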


A proof of Theorem \ref{rotational} can be found in the Appendix. The proof follows the lines of \cite{evarataj}, but avoids the theory of slices. Instead, the area and coarea formulae are applied directly. 

\begin{rem}\label{boundary0}
Theorem 3.1 does not hold if the assumption (i) is relaxed. As a simple counterexample, let $d=2$, $j=1$, and $k=0$. Let $X$ be a polygon with a vertex at $o$ and let $\psi\equiv 1$. Then, $\Psi_{0,L}^\psi(X\cap L)=1$ for all $L\in\mathcal{L}_1^2$ and the left hand side of (16) becomes $c_{2,1}=\pi$. The inner integral at the right hand side of (16) is simply $|p(n|x)|$. Since $\{(o,n)\in\mathrm{nor} X\}$ has positive measure, the right hand side of (16) is undefined.
\end{rem}

In the special case $k=j-1$, $\Psi^\psi_{k,L} (X\cap L)$ is an integral with respect to the Hausdorff measure on the normal bundle of $X\cap L$. Since $\G\big(L^x, A_{\{1,\dots,d-1\}}\big)=\big|p(n|L^x)\big|$, Theorem \ref{rotational} shows that the rotational integral $\Psi(X)$ is again an integral with respect to the Hausdorff measure over the normal bundle of $X$. This is made precise by the following corollary.

\begin{cor}\label{rotcor}
Suppose $X\in \widetilde{\mathcal{PR}}^d$. Let $\psi: \mathcal{U}_j^d \to \R$ be a locally bounded measurable function and $1\leq  j<d $.
 Then
\begin{align*}
\int_{\Li_{j}^d} \Psi^\psi_{j-1,L} {}&(X\cap L)\, \di L\\
&= \int_{\nor X} \frac{1}{|x|^{d-j}} \int_{\Li_{j-1}^{x^\perp}} \psi\big(L^x,x,\pi\big(n | L^x\big)\big) \big|p(n | L^x)\big|\, \di L\, \Lambda_{d-1}(X;\di (x,n)) .
\end{align*}
\end{cor}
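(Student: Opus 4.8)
The plan is to deduce Corollary~\ref{rotcor} from Theorem~\ref{rotational} by specializing to $k=j-1$ and then re-expressing the resulting integrand in terms of the curvature measure $\Lambda_{d-1}(X;\cdot)$. First I would observe that when $k=j-1$ the only index set $I\subseteq\{1,\dots,d-1\}$ with $|I|=j-1-k=0$ is $I=\emptyset$, so the inner sum in \eqref{mainrotform} collapses to the single term $I=\emptyset$. For this term $\prod_{i\in I}\kappa_i(x,n)$ is the empty product, equal to $1$, and $A_\emptyset(x,n)=\operatorname{span}\{a_i(x,n):i\notin\emptyset\}=\operatorname{span}\{a_1(x,n),\dots,a_{d-1}(x,n)\}$, which is the full tangent hyperplane $T_x\partial X$, equivalently $n^\perp$. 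Also $\sigma_{j-k}=\sigma_1=2$ and the exponent $j-k=1$ in the denominator $|p(n|L^x)|^{j-k}$.

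Next I would handle the generalized sine factor. By the remark in the paper that $\G(L_{d-1},L_k)=|p(n|L_k)|$ for a unit normal $n$ of $L_{d-1}$, applied with the hyperplane $A_\emptyset(x,n)=n^\perp$, we get $\G(L^x,A_\emptyset(x,n))=\G(L^x,n^\perp)=|p(m|L^x)|$ where $m$ is a unit normal of $n^\perp$, i.e.\ $m=\pm n$; hence $\G(L^x,A_\emptyset(x,n))=|p(n|L^x)|$. (Alternatively one argues directly: extend an orthonormal basis of $L^x\cap n^\perp$ to bases of $L^x$ and of $n^\perp$; the $(d-1)$-dimensional parallelepiped spanned by all these vectors has volume $|p(n|L^x)|$ because the only "extra" directions are within $n^\perp$ on one side and the one direction $\pi(n|L^x)$ in $L^x$ on the other.) Therefore $\G(L^x,A_\emptyset(x,n))^2/|p(n|L^x)|^{j-k}=|p(n|L^x)|^2/|p(n|L^x)|=|p(n|L^x)|$, which is precisely the factor appearing in the corollary.

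Finally I would reassemble: substituting $I=\emptyset$, $\sigma_1=2$, and the above simplification into \eqref{mainrotform} gives
\[
\int_{\Li_j^d}\Psi^\psi_{j-1,L}(X\cap L)\,\di L
=\frac12\int_{\nor X}\frac{1}{|x|^{d-j}}\frac{1}{\prod_{i=1}^{d-1}\sqrt{1+\kappa_i(x,n)^2}}\int_{\Li_{j-1}^{x^\perp}}\psi\big(L^x,x,\pi(n|L^x)\big)\,|p(n|L^x)|\,\di L\,\Ha^{d-1}(\di(x,n)).
\]
Now I invoke \eqref{kappa} with $k=d-1$: there $d-k-1=0$, so the sum over $|I|=0$ is again the empty-product term $1/\prod_i\sqrt{1+\kappa_i^2}$, and the prefactor is $1/\sigma_{d-k}=1/\sigma_1=1/2$. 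Hence for any locally bounded measurable $\varphi:\Sigma\to\R$,
\[
\int_\Sigma\varphi(x,n)\,\Lambda_{d-1}(X;\di(x,n))=\frac12\int_{\nor X}\frac{\varphi(x,n)}{\prod_{i=1}^{d-1}\sqrt{1+\kappa_i(x,n)^2}}\,\Ha^{d-1}(\di(x,n)).
\]
Applying this with $\varphi(x,n)=|x|^{-(d-j)}\int_{\Li_{j-1}^{x^\perp}}\psi(L^x,x,\pi(n|L^x))\,|p(n|L^x)|\,\di L$ — which is locally bounded because $o\notin\partial X$ keeps $|x|$ bounded away from $0$ on $\nor X$, $\psi$ is locally bounded, and $|p(n|L^x)|\le 1$ — turns the displayed identity into exactly the statement of the corollary. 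I do not expect a serious obstacle here; the only point requiring a little care is the verification that $\G(L^x,A_\emptyset(x,n))=|p(n|L^x)|$, i.e.\ correctly identifying the tangent subspace for $I=\emptyset$ with $n^\perp$ and matching the generalized sine to the stated projection factor, together with checking the local boundedness of $\varphi$ so that \eqref{kappa} applies.
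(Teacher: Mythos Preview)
Your proof is correct and follows essentially the same route as the paper: specialize Theorem~\ref{rotational} to $k=j-1$, observe that the only surviving index set is $I=\emptyset$, identify $A_\emptyset(x,n)=n^\perp$, use $\G(L^x,n^\perp)=|p(n|L^x)|$ to simplify the integrand, and then recognize the resulting expression as an integral against $\Lambda_{d-1}(X;\cdot)$ via \eqref{kappa}. The paper compresses all of this into the single remark ``Since $\G\big(L^x, A_{\{1,\dots,d-1\}}\big)=\big|p(n|L^x)\big|$'' (note the apparent typo there: it should read $A_\emptyset$, as you correctly have it), so your write-up simply spells out the steps in more detail, including the local boundedness check for $\varphi$.
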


\subsection{Rotational Crofton formulae for Minkowski tensors}
If we choose 
\begin{equation*}
\psi(L,x,n) = \frac{\sigma_{j-k}}{r!s!\sigma_{j-k+s}} x^rn^s
\end{equation*}
in \eqref{psiE}, then $\Psi_{k,L}^\psi$ is the Minkowski tensor $\Phi^{r,s}_{k,L}$ in $L$ and Theorem \ref{rotational} becomes a result concerning the rotational integral of Minkowski tensors.

 The special case $s=0$ was treated in \cite[Proposition 5.3]{jeremy}.  For $s>0$ and $k=j-1$, the formula in Theorem \ref{rotational} can be given a more explicit expression. This is shown in the following theorem when $j>1$. The case $j=1$ is simpler and is postponed to Section \ref{j=0}.
To state the theorem, we introduce the following notation for $(x,n)\in \nor X$:
\begin{equation*}
\alpha= \alpha(x,n)=\sin(\angle(x,n)) = \sqrt{1-\tfrac{\langle x,n \rangle^2}{|x|^2}},
\end{equation*}
where $\angle(x,n)$ is the angle between $x$ and $n$. 

\begin{thm}\label{rotationmink}
Suppose $X\in \widetilde{\mathcal{PR}}^d$ and $1<j<d$. Then
\begin{align*}
 \int_{\Li_j^d} \Phi_{j-1,L}^{r,s}{}&(X \cap L)\, \di L = \frac{ \sigma_1 c_{d-3,j-2}}{r!s!\sigma_{s+1}} \sum_{a+b+c+2l=s} \begin{pmatrix}s\\a,b,c,2l\end{pmatrix}\frac{\sigma_{2l+d-2}\sigma_{d-1+2b+2c+4l}}{\sigma_{2l+1}\sigma_{j-1+2b+2c+2l}\sigma_{d-j+2l}} \\
&\times \sum_{p+q+t+v=l} \begin{pmatrix}l\\p,q,t,v\end{pmatrix} (-1)^{q+v+b}2^{t+1}    
 Q^p \int_{\nor X}  
n^{c+2q +t}   \frac{x^{r+a+b+2v+t}}{|x|^{d-j+a+b+2v+2t}}  \\
&\times{\alpha^{2p}\langle x,n \rangle^{a+b+t}} {}_2F_1\Big(\tfrac{s-1}{2},  \tfrac{d-j+2l}{2} ; \tfrac{d-1+2b+2c+4l}{2}    ; {\alpha^2}\Big) \,   \Lambda_{d-1}(X;\di (x,n)).
\end{align*}
For $\alpha=1$, the integrand should be interpreted as the limit when $\alpha \to 1$.
\end{thm}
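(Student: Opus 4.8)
The plan is to start from Theorem~\ref{rotational} specialized to $k=j-1$ with $\psi(L,x,n)=\frac{\sigma_1}{r!s!\sigma_{s+1}}x^rn^s$ (using $\sigma_{j-k}=\sigma_1$, $\sigma_{j-k+s}=\sigma_{s+1}$), so that the left-hand side is $\int_{\Li_j^d}\Phi_{j-1,L}^{r,s}(X\cap L)\,\di L$. By Corollary~\ref{rotcor}, this equals
\begin{equation*}
\frac{\sigma_1}{r!s!\sigma_{s+1}}\int_{\nor X}\frac{1}{|x|^{d-j}}\int_{\Li_{j-1}^{x^\perp}} x^r\,\pi(n|L^x)^s\,\bigl|p(n|L^x)\bigr|\,\di L\,\Lambda_{d-1}(X;\di(x,n)).
\end{equation*}
Since $\pi(n|L^x)=p(n|L^x)/|p(n|L^x)|$, the factor $\pi(n|L^x)^s|p(n|L^x)|$ becomes $p(n|L^x)^s|p(n|L^x)|^{1-s}$, and everything reduces to evaluating the inner integral $\int_{\Li_{j-1}^{x^\perp}} p(n|L^x)^s|p(n|L^x)|^{1-s}\,\di L$ as a tensor-valued function of the pair $(x,n)$. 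Fixing $(x,n)$, I would decompose $n=\langle x,n\rangle x/|x|^2 + n'$ where $n'\perp x$, set up an orthonormal frame adapted to $x$ and $n'$, and parametrize $L^x = \text{span}(x)\oplus M$ with $M\in\Li_{j-1}^{x^\perp}$; then $p(n|L^x) = \langle x,n\rangle x/|x|^2 + p(n'|M)$, so $|p(n|L^x)|^2 = \langle x,n\rangle^2/|x|^2 + |p(n'|M)|^2$. Expanding the symmetric tensor power $p(n|L^x)^s$ by the multinomial theorem into terms involving $(\langle x,n\rangle x/|x|^2)$ raised to some power and $p(n'|M)$ to another, and similarly expanding $|p(n|L^x)|^{1-s}$ via a binomial/hypergeometric series in $|p(n'|M)|^2/(\text{rest})$ — or more cleanly writing $|p(n|L^x)|^{1-s}=(\langle x,n\rangle^2/|x|^2)^{(1-s)/2}(1+\text{ratio})^{(1-s)/2}$ — reduces the whole computation to integrals over $\Li_{j-1}^{x^\perp}$ of the form $\int p(n'|M)^{\otimes m}|p(n'|M)|^{2N}\,\di M$.

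The key technical lemma I would need is an explicit formula for $\int_{\Li_{m}^{V}} p(w|M)^{\otimes 2a}|p(w|M)|^{2N}\,\di M$ for a fixed vector $w$ in a $D$-dimensional space $V$, with $M$ ranging over $m$-subspaces of $V$. By rotation invariance this must be a linear combination of $Q(V)^p\, w^{\otimes(2a-2p)}|w|^{\text{something}}$, and the scalar coefficients are ratios of $\sigma$'s coming from integrating powers of $\cos^2$ of the angle between $w$ and $M$ against the appropriate beta-type density on $\Li_m^V$ (this is essentially \eqref{sphereform} / \eqref{coarea}-type reasoning, or can be derived from the known moments of $|p(w|M)|^2$ on Grassmannians). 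Collecting all the combinatorial factors: the multinomial split of $s$ into $a+b+c+2l$ records (roughly) the power $a$ of $x$ coming from the "parallel" part of the exponent on the first factor, $b$ from the parallel part entering through $|p(\cdot)|^{1-s}$, $c$ the power of $n'$ surviving, and $2l$ the exponent on $|p(n'|M)|^2$ before applying the moment lemma; the second multinomial split of $l$ into $p+q+t+v$ then records how each $Q(L^x)$ arising from the moment lemma is itself split as $x^2/|x|^2$ (contributing $\alpha^2 = 1-\langle x,n\rangle^2/|x|^2$ via $|n'|^2$ bookkeeping) plus $Q(x^\perp)$, the latter further decomposed into $Q$ on the ambient space minus $x^2/|x|^2$. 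The emergence of $\alpha$ is exactly $|n'|^2/|x|^2$-type quantities, and the $\langle x,n\rangle^{a+b+t}$ collects all parallel-direction scalar factors.

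The hypergeometric ${}_2F_1$ appears because, after pulling out the finite multinomial sums, there remains an infinite sum over one index — the residual expansion of $(1+|p(n'|M)|^2|x|^2/\langle x,n\rangle^2)^{(1-s)/2}$, or equivalently of a remaining beta-integral over the Grassmannian — whose Pochhammer-symbol coefficients assemble into ${}_2F_1(\frac{s-1}{2},\frac{d-j+2l}{2};\frac{d-1+2b+2c+4l}{2};\alpha^2)$; the argument is $\alpha^2$ because $\alpha^2$ is precisely the "off-diagonal" ratio $|p(n'|M)|^2$ after the Grassmannian integration normalizes the directional part. Matching the three half-integer parameters requires care: the numerator parameter $\frac{s-1}{2}$ comes from the exponent $\frac{1-s}{2}$ (after one Pochhammer reflection), $\frac{d-j+2l}{2}$ and $\frac{d-1+2b+2c+4l}{2}$ come from the dimension counts $\dim x^\perp - \dim M = d-j$ (shifted by $2l$) and the beta-weight normalization on $\Li_{j-1}^{x^\perp}$ (shifted by $2b+2c+4l$) respectively. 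The constants $\sigma_{2l+d-2}\sigma_{d-1+2b+2c+4l}/(\sigma_{2l+1}\sigma_{j-1+2b+2c+2l}\sigma_{d-j+2l})$ and the prefactor $\sigma_1 c_{d-3,j-2}/(r!s!\sigma_{s+1})$ are then just the accumulated normalizing constants from $c_{d,j}$, the moment lemma, and \eqref{sphereform}.

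\textbf{Main obstacle.} The genuinely hard part is the bookkeeping: correctly tracking how each metric tensor $Q(L^x)$ produced by the Grassmannian moment lemma cascades through the two nested multinomial expansions into the observed combination of $Q^p$, $\alpha^{2p}$, $n^{c+2q+t}$, $x^{r+a+b+2v+t}$, $\langle x,n\rangle^{a+b+t}$, and the powers of $|x|$, and verifying that the surviving infinite series telescopes exactly into the stated ${}_2F_1$ with the precise half-integer parameters. I expect the tensor-valued moment lemma on Grassmannians, and the identification of its normalizing constants with the $\sigma$-ratios above, to be where the argument is most delicate; the sign $(-1)^{q+v+b}$ and the power $2^{t+1}$ must be read off carefully from the $(1+\cdot)^{(1-s)/2}$ binomial coefficients and the "minus" in $Q(x^\perp)=Q-x^2/|x|^2$. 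The $\alpha\to1$ degeneration (when $\langle x,n\rangle=0$) is handled by noting the ${}_2F_1$ is a polynomial or its limit at $1$ is finite by Gauss's theorem \eqref{value1}, since the parameters satisfy $c-a-b = \frac{d-1+2b+2c+4l}{2}-\frac{s-1}{2}-\frac{d-j+2l}{2} = \frac{j+2b+2c+2l-s}{2}>0$ on the relevant range once one restricts to the nonzero multinomial terms.
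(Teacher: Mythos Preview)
Your starting point is exactly right: apply Corollary~\ref{rotcor} with $\psi(L,x,n)=\tfrac{\sigma_1}{r!s!\sigma_{s+1}}x^rn^s$, decompose $n=n_x+n_{x^\perp}$ with $n_x=\langle x,n\rangle x/|x|^2$, and reduce to computing
\[
I=\int_{\Li_{j-1}^{x^\perp}}p(n|L^x)^s\,|p(n|L^x)|^{1-s}\,\di L .
\]
From there, however, your execution plan diverges from the paper's and carries some risks.

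\textbf{How the paper actually proceeds.} Rather than expanding $|p(n|L^x)|^{1-s}$ as a binomial series or invoking a tensor-valued Grassmannian moment lemma, the paper applies the coarea formula \eqref{coarea} (with $v=n_{x^\perp}/|n_{x^\perp}|$) followed by the spherical decomposition \eqref{sphereform} to convert $I$ directly into an iterated integral over $S^{d-3}(x^\perp\cap n^\perp)\times(0,1)$. After a \emph{finite} multinomial expansion of $p(n|L^x)^s=(n_x+\alpha t(\,t\alpha^{-1}n_{x^\perp}+\sqrt{1-t^2}\,\omega))^s$, the $\omega$-integral gives $Q(x^\perp\cap n^\perp)^l$ via \eqref{Qint}, and the remaining $t$-integral is already the Euler integral representation of ${}_2F_1$. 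No infinite series is summed; the hypergeometric function appears in closed form from the one-dimensional radial integral.

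\textbf{Where your description goes astray.} The intermediate metric tensor that appears is $Q(x^\perp\cap n^\perp)$, not $Q(L^x)$; the second multinomial split $l=p+q+t+v$ in the statement comes from expanding
\[
Q(x^\perp\cap n^\perp)^l=\Bigl(Q-\tfrac{x^2}{|x|^2}-\tfrac{n_{x^\perp}^2}{\alpha^2}\Bigr)^l
\]
and then writing $n_{x^\perp}=n-\langle x,n\rangle x/|x|^2$. This is where the signs $(-1)^{q+v}$ and the factor $2^t$ originate (the cross term in $n_{x^\perp}^2$), not from a decomposition of $Q(L^x)$ or $Q(x^\perp)$. Your reading of the indices $a,b,c$ is also off: in the paper $a$ is the power of $n_x$, the original $b$ (power of $n_{x^\perp}$) is afterwards split into $b,c$ via the binomial expansion of $n_{x^\perp}^b=(n-n_x)^b$, supplying the extra sign $(-1)^b$.

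\textbf{Two technical pitfalls in your route.} First, expanding $|p(n|L^x)|^{1-s}=(1-\alpha^2)^{(1-s)/2}(1+\text{ratio})^{(1-s)/2}$ as a power series in the ratio is only valid where the ratio has modulus $<1$; since the ratio equals $\alpha^2|p(\hat n_{x^\perp}|M)|^2/(1-\alpha^2)$ and can be arbitrarily large, this series approach needs a separate justification (the paper avoids this entirely by keeping the $t$-integral intact). Second, your claim that the Gauss condition $c-a-b>0$ holds ``on the relevant range'' is wrong: with the constraint $a+b+c+2l=s$ one computes $c-a-b=\tfrac{j+b+c-a}{2}$ in your notation, which is negative for large $a$. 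The paper handles this by observing that for $a>0$ the prefactor $|n_x|^a=(1-\alpha^2)^{a/2}$ forces the whole term to vanish as $\alpha\to1$ (a short estimate bounds $(1-\alpha^2)^{a/2}F(\alpha^2)$ by $(1-\alpha^2)^{1/2}$ times a finite beta integral), while for $a=0$ one has $s=b+2l$ in the paper's initial parametrisation and Gauss's theorem \eqref{value1} applies.
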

We remark here that in the case where $X$ is convex and $o$ is an interior point of $X$, the situation $\alpha=1$ does not occur.

\begin{proof}
Corollary \ref{rotcor} shows that
\begin{align}\label{samlet}
 \int_{\Li_j^d} \Phi_{j-1,L}^{r,s}{}&(X \cap L)\ \di L \\
& =\frac{\sigma_1}{r!s!\sigma_{s+1}}\int_{\nor X} \frac{x^r}{|x|^{d-j}}  \int_{\Li_{j-1}^{x^\perp}} \frac{ p\big(n | L^x\big)^s }{\big|p\big(n | L^x\big)\big|^{s-1}}\, \di L\, \Lambda_{d-1}(X;\di (x,n)).\nonumber
\end{align}
We compute the inner integral. Write $n= n_x + n_{x^\perp}$ where $n_x=\langle n,x\rangle x/|x|^2$ is the  projection of $n$ onto $x$ and $n_{x^\perp}$ the projection of $n$ onto $x^\perp$. 
Then,
\begin{align*}
I:={}& \int_{\Li_{j-1}^{x^\perp}}  p\big(n|L^x\big)^s \big|p\big(n| L^x\big)\big|^{1-s}\, \di L = \int_{\Li_{j-1}^{x^\perp}}  \big(n_x + p\big(n_{x^\perp}| L\big)\big)^s \big|n_x + p\big(n_{x^\perp}| L\big)\big|^{1-s}\, \di L.
\end{align*}
If $n_{x^\perp}\neq 0$,  we may use
\eqref{coarea} and \eqref{sphereform} with $v=n_{x^\perp}/|n_{x^\perp}|$ and find
\begin{align}\nonumber
I{} 
&= c_{d-3,j-2}\int_{S^{d-2}(x^\perp )} \mathds{1}_{\{\langle u,n_{x^\perp}\rangle>0\}}  \bigg(\frac{\sqrt{|n_{x^\perp}|^2-\langle n_{x^\perp},u \rangle^2}}{\langle n_{x^\perp},u \rangle} \bigg)^{2-j}\frac{\big(n_x +\langle n_{x^\perp}, u\rangle u \big)^s}{ \big|n_x + \langle n_{x^\perp},u\rangle u \big|^{s-1}}\, \di u\\ \nonumber
&=  c_{d-3,j-2}\int_{S^{d-3}(x^\perp \cap n^\perp)}\int_{0}^1 { t }^{j-2}  (1-t^2)^{\frac{d-2-j}{2}} \frac{\big(n_x + \alpha t \big(t \alpha^{-1}{n_{x^\perp}} + \sqrt{1-t^2}\omega \big)\big)^s }{((1-\alpha^2) + \alpha^2 t^2  )^{\frac{s-1}{2}}}\,\di t\,\di \omega\\ \label{sumI}
&=  c_{d-3,j-2}\sum_{a+b+l=s}\begin{pmatrix}s\\a,b,l\end{pmatrix}n_x^a n_{x^\perp}^b\alpha^{l}\\
&\quad \times \int_{S^{d-3}(x^\perp \cap n^\perp)}\int_{0}^1  \omega^{l} t^{j-2+2b+l}((1-\alpha^2) + \alpha^2 t^2  )^{\frac{1-s}{2}} (1-t^2)^{\frac{d-2 -j+l}{2} }\, \di t\, \di \omega. \nonumber
\end{align}
Note that
\begin{equation}\label{Qint}
\int_{S^{d-3}(x^\perp \cap n^\perp)} \omega^{l}\, \di \omega = \begin{cases}
2\frac{\sigma_{l+d-2}}{\sigma_{l+1}} Q(x^\perp \cap  n^\perp)^{\frac{l}{2}}, & l  \text{ even},\\
0, & l \text{ odd},
\end{cases}
\end{equation}
as shown in e.g.\ \cite[(24)]{schsch}, and that 
\begin{align}\nonumber
 F_{d,j,s,l,b}(\alpha^2) {}&:=\int_{0}^1  t^{j-2+2b+2l}((1-\alpha^2) + \alpha^2 t^2  )^{\frac{1-s}{2}} (1-t^2)^{\frac{d-j-2}{2}+l}\, \di t\\ \nonumber
&=\frac{1}{2}\int_{0}^1  (1-t)^{\frac{j-3+2b+2l}{2}}(1 - \alpha^2 t  )^{\frac{1-s}{2}} t^{\frac{d-j-2+2l}{2}} \,\di t\\
&= \frac{\sigma_{d-1+2b+4l}}{\sigma_{j-1+2b+2l}\sigma_{d-j+2l}}{}_2F_1\Big(\tfrac{s-1}{2},  \tfrac{d-j+2l}{2} ; \tfrac{d-1+2b+4l}{2}    ; {\alpha^2}\Big) \label{hypergeo}
\end{align}
for $\alpha<1$. This yields
\begin{align}\label{longI}
I=c_{d-3,j-2} \sum_{a+b+2l=s} 2\frac{\sigma_{2l+d-2}}{\sigma_{2l+1}} \begin{pmatrix}s\\ a,b,2l\end{pmatrix}n_x^a n_{x^\perp}^b \alpha^{2l}  Q(x^\perp \cap  n^\perp)^l F_{d,j,s,l,b}(\alpha^2),
\end{align}
when $\alpha <1$.
For $\alpha = 1$, only terms with $a=0$ contribute to \eqref{sumI} since $n_x=0$. This corresponds to interpreting terms of the form
\begin{equation*}
n_x^a F_{d,j,s,l,b}(\alpha^2) 
\end{equation*}
in \eqref{longI} as the limit when $\alpha \to 1$. Indeed, this holds for $a=0$ because $s=b+2l$ in this case, and hence \eqref{value1} shows that $F_{d,j,s,l,b}(1)$ is finite. For $a>0$, we have
\begin{align*}
|n_x|^a {}& F_{d,j,s,l,b}(\alpha^2) = (1-\alpha^2)^{\frac{s-b-2l}{2}}\frac{1}{2}\int_{0}^1  (1-t)^{\frac{j-3+2b+2l}{2}}(1 - \alpha^2 t  )^{\frac{1-s}{2}} t^{\frac{d-j-2+2l}{2}}\, \di t\\
&=(1-\alpha^2)^{\frac{1}{2}}\frac{1}{2}\int_{0}^1  (1-t)^{\frac{j-3+2b+2l}{2}}\bigg(\frac{1-\alpha^2}{1-\alpha^2 t}\bigg)^{\frac{s-b-2l-1}{2}}(1 - \alpha^2 t  )^{\frac{-b-2l}{2}} t^{\frac{d-j-2+2l}{2}} \,\di t\\
&\leq (1-\alpha^2)^{\frac{1}{2}}\frac{1}{2} \int_{0}^1  (1-t)^{\frac{j-3+b}{2}} t^{\frac{d-j-2+2l}{2}}\, \di t. 
\end{align*}
Our assumptions on $j$ ensure that this converges to 0 when $\alpha\to 1$.

It is easy to check that the formula \eqref{longI} also holds when $n_{x^\perp}=0$ since $\alpha=0 $ in this case.
Finally, we use that $n_x=\langle n, x\rangle x/|x|^2$ and $n_{x^\perp}=n-\langle n, x\rangle x/|x|^2$ to obtain
\begin{align*}
n_x^a n_{x^\perp}^b 
&= \sum_{c=0}^b \binom{b}{c}(-1)^{b-c} n^c x^{a+b-c}  \langle n, x\rangle^{a+b-c} |x|^{-2(a+b-c)}
\end{align*}
and 
\begin{align} \label{expandQ}
Q(x^\perp \cap  n^\perp)^l {}&= \bigg( Q-\bigg(\frac{x}{|x|}\bigg)^2 - \bigg(\frac{n_{x^\perp}}{|n_{x^\perp}|}\bigg)^2 \bigg)^l \\
&=\sum_{p+q+v+t=l} \begin{pmatrix}l\\p,q,v,t\end{pmatrix} (-1)^{q+v}2^t\alpha^{-2(q+v+t)}\langle x,n  \rangle^{t}|x|^{-2v-2t}  Q^p n^{2q +t} x^{2v+t}. \nonumber
\end{align}
Inserting everything in \eqref{samlet} and renaming indices proves the theorem.
\end{proof}

\begin{ex}
Let $d=3$ and $j=2$. Previously, explicit formulae for
\begin{equation*}
\int_{\Li_2^3} \Phi_{2,L}^{r,0} (X\cap L)\, \di L \ \text{ and }\  \int_{\Li_2^3} \Phi_{1,L}^{r,0}(X\cap L)\, \di L 
\end{equation*}
have been given \cite[Example 5.2 and 5.4]{jeremy}. Theorem \ref{rotationmink} opens up for studying the integrals 
\begin{equation*}
\int_{\Li_2^3} \Phi_{1,L}^{r,s} (X\cap L)\, \di L
\end{equation*}
for arbitrary $s$.

 For $s=1$, we use that ${}_2F_1(0, b;c; \alpha^2)=1$ and get
\begin{align*}
 &\int_{\Li_2^3}  \Phi_{1,L}^{r,1}(X \cap L) \, \di L\\
 &=\frac{1 }{r!\pi} \sum_{a+b+c=1} \frac{\sigma_{2+2b+2c}}{\sigma_{1+2b+2c}} (-1)^{b}    
  \int_{\nor X} 
  n^{c}\frac{ x^{r+a+b}}{|x|^{d-j+a+b}}  {\langle x,n \rangle^{{a+b}}} \,  \Lambda_{2}(X;\di (x,n))\\
&= \frac{1 }{2r!} \bigg(\int_{\nor X} 
  \frac{x^{r+1}}{|x|^{2}} \langle x,n \rangle  \,  \Lambda_{2}(X;\di (x,n))+ \int_{\nor X} 
  n\frac{x^{r}}{|x|} \, \Lambda_{2}(X;\di (x,n))  \bigg).
\end{align*}
For $s=2$, Theorem \ref{rotationmink} yields the following expression
\begin{align}\label{3Ds2}
 &\int_{\Li_2^3} \Phi^{r,2}_{1,L}(X \cap L)\, \di L \\ \nonumber
&= \frac{  2}{\sigma_{3}r!}  \bigg( \sum_{a+b+c=2} \begin{pmatrix}2\\a,b,c\end{pmatrix}  (-1)^{b}      \int_{\nor X} 
n^{c}   x^{r+a+b}\frac{\langle x,n\rangle^{{a+b}}}{|x|^{1+a+b}}      F_{3,2,2,0,b+c}(\alpha^2)\, \Lambda_{2}(X;\di (x,n))\\ 
&+  \sum_{p+q+t+v=1} (-1)^{q+v}2^{t}     Q^p  \int_{\nor X} 
n^{2q +t}   x^{r+2v+t}\frac{\alpha^{2p}\langle x,n \rangle^{{t}}}{|x|^{1+2v+2t}}      F_{3,2,2,1,0}(\alpha^2) \, \Lambda_{2}(X;\di (x,n)) \bigg),\nonumber
\end{align}
where $ F_{d,j,s,l,b}$ is as in \eqref{hypergeo}. The hypergeometric functions involved can be found at \cite{wolfram}. 
If $K$ and $E$ denote the complete elliptic integrals of the first and second kind, respectively, we get
\begin{align*}
& F_{3,2,2,0,0}(\alpha^2)=\tfrac{\pi}{2}  {}_2F_1\Big(\tfrac{1}{2},  \tfrac{1}{2} , 1    ; {\alpha^2}\Big) = K(\alpha^2),\\
&F_{3,2,2,0,1}(\alpha^2)= \tfrac{\pi}{4}  {}_2F_1\Big(\tfrac{1}{2},  \tfrac{1}{2} , 2   ; {\alpha^2}\Big) 
= \alpha^{-2}(E(\alpha^2)+(\alpha^2-1)K(\alpha^2)),\\
&F_{3,2,2,1,0}(\alpha^2)=
\tfrac{\pi}{16}  {}_2F_1\Big(\tfrac{1}{2},  \tfrac{3}{2} , 3   ; {\alpha^2}\Big)  =\tfrac{1}{3\alpha^{4}} (2(\alpha^2-1)K(\alpha^2)- (\alpha^2-2)E(\alpha^2)),\\
&F_{3,2,2,0,2}(\alpha^2)=
\tfrac{3\pi}{16} {}_2F_1\Big(\tfrac{1}{2},  \tfrac{1}{2} , 3   ; {\alpha^2}\Big) 
= \tfrac{1}{3\alpha^{4}}((4\alpha^2-2)E(\alpha^2)+(3\alpha^4 -5\alpha^2 + 2)K(\alpha^2) ).
\end{align*}
This can be inserted in \eqref{3Ds2} to simplify the expression, but the functions $E(\alpha^2)$ and $K(\alpha^2)$ do not cancel out.
\end{ex}

\subsection{The case $j=1$}\label{j=0}
If $L\in {\Li_{0}^{x^\perp}}$, then $L^x$ is the line spanned by $x$. Moreover, if $x$ and $n$ are non-orthogonal, then $\pi(n| L^x)=\frac{\langle x,n \rangle x}{|\langle x,n \rangle ||x|}$.  Thus, Corollary \ref{rotcor} becomes 
\begin{align*}
\int_{\Li_1^d} \Psi^\psi_{0,L}(X \cap L) \,\di L & = \int_{\nor X} \frac{1}{|x|^{d-1}}  \int_{\Li_{0}^{x^\perp}}  \psi\big(L^x,x,\pi\big(n|L^x\big) \big) \big|p(n | L^x)\big|\, \di L\, \Lambda_{d-1}(X;\di (x,n))\\ 
 &=\frac{1}{\sigma_{1}}\int_{\nor X} \frac{\psi\Big(\text{span}(x),x, \frac{\langle x,n \rangle x}{|\langle x,n \rangle ||x|}  \Big)}{|x|^{d}} |\langle x, n \rangle|  \, \Ha^{d-1}(\di x).
\end{align*}
In the special case of Minkowski tensors, this yields 
\begin{align}\label{j1mink}
\int_{\Li_1^d} \Phi_{0,L}^{r,s}(X \cap L)\, \di L
& =\frac{1}{r!s!\sigma_{s+1}}\int_{\nor X} \frac{x^{r+s}\langle x,n \rangle^s}{|x|^{d+s}|\langle x,n \rangle|^{s-1}}  \, \Ha^{d-1}(\di x).
\end{align}
We remark that if $X\in \mathcal{K}^d$ and $o$ lies in the interior of $X$, then $\pi(n| L^x)={x}/{|x|}$, so \eqref{j1mink} simplifies to
\begin{equation*}
\int_{\Li_1^d} \Phi_{0,L}^{r,s}(X \cap L)\, \di L
=\frac{1}{r!s!\sigma_{s+1}}\int_{\nor X} \frac{x^{r+s}}{|x|^{d+s-1}}\langle x,n\rangle \,\Ha^{d-1}(\di x).
\end{equation*}

\subsection{The case $j=d-1$}
In the case $j=d-1$, the rotational integral in Theorem \ref{rotational} can also be computed explicitly. We demonstrate this only for Minkowski tensors $\Phi_{k,L}^{r,s}$ with $k< d-2$ since the case $j=d-1$ and $k=d-2$ is covered by Theorem \ref{rotationmink}. We get 

\begin{align} \nonumber 
\int_{\Li_{d-1}^d}{}& \Phi^{r,s}_{k,L}(X \cap L) \,\di L =\frac{1}{2r!s!\sigma_{d-1-k+s}}\int_{\nor X} \frac{x^r}{|x|}\sum_{|I|=d-2-k} \frac{\prod_{i\in I} \kappa_i(x,n)}{\prod_{i=1}^{d-1} \sqrt{1+\kappa_i(x,n)^2}} \\ \nonumber
&\times \int_{S^{d-2}({x^\perp})}(n-\langle n,u \rangle u)^s  |p(u|A_I(x,n))|^2 { |n-\langle n, u\rangle u |^{k-d+1-s}}\, \di u \, \Ha^{d-1}(\di (x,n))\\ \nonumber
={}& \frac{1}{2r!s!\sigma_{d-1-k+s}}\int_{\nor X} \sum_{a+b=s}\binom{s}{a} (-1)^b  n^a\frac{x^r}{|x|}\sum_{|I|=d-2-k} \frac{\prod_{i\in I} \kappa_i(x,n)}{\prod_{i=1}^{d-1} \sqrt{1+\kappa_i(x,n)^2}} \\ \label{d-1int} 
&\times \sum_{i\notin I} \text{Contr} \bigg(\int_{S^{d-2}({x^\perp})}\langle n,u \rangle^b u^{b+2}  { (1-\langle n, u\rangle^2)^{\frac{k-d+1-s}{2}}}\, \di u , a_i^2\bigg)\, \Ha^{d-1}(\di (x,n)).
\end{align}
If $x$ and $n$ are not parallel, then \eqref{sphereform} with $v=\pi(n|x^\perp)$ yields
\begin{align*}
{}&\int_{S^{d-2}({x^\perp})}\langle n,u \rangle^b u^{b+2}  { (1-\langle n, u\rangle^2 )^{\frac{k-d+1-s}{2}}} \,\di u \\
&\quad =\alpha^b  \int_{S^{d-3}({x^\perp}\cap n^\perp)} \int_{-1}^1t^b(1-t^2)^{\frac{d-4}{2}}\big(t\pi(n|x^\perp)+\sqrt{1-t^2}w\big)^{b+2}  { (1-t^2\alpha^2 )^{\frac{k-d+1-s}{2}}}\, \di t\, \di w\\
&\quad =2\alpha^b \sum_{2p+q=b+2} \binom{b+2}{2p} \frac{\sigma_{2p+d-2}}{\sigma_{2p+1}}\frac{\Gamma\big(\frac{b+q+1}{2}\big)\Gamma\big( \frac{2p +d-2}{2}\big)}{\Gamma\big( \frac{2b+d+1 }{2}\big)}  \pi(n|x^\perp)^q  Q({x^\perp}\cap n^\perp)^p\\
 &\quad \quad \times {}_2F_{1}\Big(\tfrac{d-1-k+s}{2}, \tfrac{b+q+1}{2}; \tfrac{2b+1+d}{2};\alpha^2\Big).
\end{align*}
If $x$ and $n$ are parallel, the same holds when $\pi(n|x^\perp)$ is interpreted as any vector $v\in S^{d-2}(x^\perp)$ and $Q(n^\perp \cap x^\perp)$ as $Q(x^\perp \cap v^\perp)$. 


When $\alpha\neq 0$, we may compute
\begin{align}\nonumber
\binom{b+2}{2}{}&\text{Contr} \Big(\pi(n|x^\perp)^q  Q({x^\perp}\cap n^\perp)^p , a_i^2\Big)\\ \nonumber
 ={}&   
  \binom{q}{2}  \pi(n|x^\perp)^{q-2}  Q({x^\perp}\cap n^\perp)^p \text{Contr}\Big(\pi(n|x^\perp), a_i\Big)^2\\ \nonumber
  &+
	2pq \, \pi(n|x^\perp)^{q-1} Q({x^\perp}\cap n^\perp)^{p-1}  \text{Contr}\Big(\pi(n|x^\perp), a_i\Big) \text{Contr}\Big(Q({x^\perp}\cap n^\perp), a_i\Big) \\ \nonumber
	&+
4\binom{p}{2} 
 \pi(n|x^\perp)^q  Q({x^\perp}\cap n^\perp)^{p-2} \text{Contr}\Big(Q({x^\perp}\cap n^\perp), a_i\Big)^2 \\
&+p\, \pi(n|x^\perp)^q  Q({x^\perp}\cap n^\perp)^{p-1} \text{Contr}\Big(Q({x^\perp}\cap n^\perp), a_i^2\Big) , \label{longcontr}
\end{align}
where negative powers of a tensor are interpreted as zero and
\begin{align*}
{}&\text{Contr}\Big(\pi(n|x^\perp), a_i\Big) = \langle \pi(n|x^\perp), a_i \rangle =  \frac{\langle x,n\rangle \langle x,a_i \rangle}{\alpha|x|^2},\\
{}& \text{Contr}\Big(Q({x^\perp}\cap n^\perp), a_i\Big)= p(a_i| x^\perp \cap n^\perp ) = a_i - \frac{\langle x,a_i \rangle}{\alpha|x|}\pi(x|n^\perp) ,\\
{}& \text{Contr}\Big(Q({x^\perp}\cap n^\perp), a_i^2\Big) = | p(a_i| x^\perp \cap n^\perp)|^2 =  1 - \frac{\langle x,a_i \rangle^2}{\alpha^2|x|^2}.
\end{align*}
This can be inserted in \eqref{d-1int} to provide a formula for the rotational integral.

\begin{ex}  
In dimension $d=3$, the simplest example with $j=d-1=2$ and $k<j-1=1$ is $\Phi_{0,L}^{r,0}$. To the best of our knowledge, this situation has not been treated in the literature.
Using the above computations in this case, we get
\begin{align*}
\text{Contr}{}&\bigg(\int_{S^{1}({x^\perp})}  \frac{u^{2}}  { 1-\langle n, u\rangle^2 } \, \di u ,a_i^2\bigg)\\
&={\pi } \text{Contr}\Big(    Q({x^\perp}\cap n^\perp) {}_2F_{1}\Big(1, \tfrac{1}{2}; 2;\alpha^2\Big) +  \pi(n|x^\perp)^2  {}_2F_{1}\Big(1, \tfrac{3}{2}; 2;\alpha^2\Big) ,a_i^2\Big)\\
&={\pi }  \bigg( \frac{2-2\sqrt{1-\alpha^2}}{\alpha^2} - 2\frac{(\sqrt{1-\alpha^2}-1)^2}{\alpha^4}\frac{\langle x, a_i\rangle^2}{|x|^2}      \bigg).
\end{align*}
This should be interpreted as $\frac{\pi}{2}$ when $\alpha=0$.  The values of the hypergeometric functions are taken from \cite{wolfram}. Inserting in \eqref{d-1int}, we get
\begin{align*}
\int_{\Li_{2}^3} \Phi_{0,L}^{r,0}(X \cap L)\, \di L 
= {}&\frac{1}{2r!}  \int_{\nor X} \frac{x^r}{|x|}\sum_{i=1}^2 \frac{ \kappa_i(x,n)}{\prod_{j=1}^{2} \sqrt{1+\kappa_j(x,n)^2}} \\ 
&\times  \bigg( \frac{1-\sqrt{1-\alpha^2}}{\alpha^2} - \frac{(\sqrt{1-\alpha^2}-1)^2}{\alpha^4}\frac{\langle x,a_{3-i}\rangle^2}{|x|^2}      \bigg)\, \Ha^{2}(\di (x,n)).
\end{align*}
\end{ex}

\section{Affine Crofton formulae}\label{affinesec}

\subsection{General affine formulae}\label{affsubsec}
In this section, we consider  for each affine subspace $E\in \mathcal{E}_j^d$ a valuation $\Psi^\psi_{k,E}$ defined on  compact  sets of positive reach $X\subseteq E$ by 
\begin{equation*}
\Psi^\psi_{k,E}(X)=\int_{\Sigma^E} \psi(E,x,n) \,\Lambda^{E}_k(X,\di(x,n)),
\end{equation*}
$0\leq k<j<d$.
Here $\psi: \mathcal{U}_j^d \to \R $ is a locally bounded measurable function, where $\mathcal{U}_j^d$ is as in \eqref{Uaff}. 

Suppose $X\subseteq \R^d$ is a compact set of positive reach. It follows from \cite[Theorem 6.11 (1)]{federer2} that for almost all $E\in \mathcal{E}_j^d$, 
the set $X\cap E$ has positive reach and hence $\Psi^\psi_{k,E}(X\cap E)$ is well-defined. The integral of $\Psi^\psi_{k,E}(X\cap E)$ with respect to the motion invariant measure on $\mathcal{E}_j^d$ is determined in the next theorem.
\begin{thm}\label{crofton}
Let $X\in \mathcal{PR}^d$ and $0\leq k<j<d$. Then,
\begin{align}\label{croftonresult}
\int_{\mathcal{E}_j^d} \Psi^\psi_{k,E}(X\cap E) \, \di E ={}& \frac{1}{ \sigma_{j-k}} \int_{\nor X} \sum_{|I|=j-k-1}   \frac{\prod_{i\in I} \kappa_i(x,n)}{\prod_i \sqrt{1+\kappa_i(x,n)^2}}\\
& \times  \int_{\mathcal{\Li}_{j}^d}   \psi(L+x,x,\pi(n|L)) \frac{ \mathcal{G}(L, A_I(x,n) )^2}{|p(n|L)|^{j-k}} \, \di L \, \Ha^{d-1} (\di (x,n)). \nonumber
\end{align}
For $k=j-1$, this can be simplified to
\begin{align*}
\int_{\mathcal{E}_j^d} \Psi^\psi_{j-1,E}(X\cap E) \, \di E {}&=  \int_{\nor X} \int_{\mathcal{\Li}_{j}^d} \psi(L+x,x,\pi(n|L)) {|p(n|L)|} \,   \di L \, \Lambda_{d-1} (X;\di (x,n)).
\end{align*}
\end{thm}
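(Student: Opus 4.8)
\emph{Proof plan.} The plan is to run the argument in parallel with the proof of Theorem~\ref{rotational}, replacing the rotational disintegration by the decomposition $\di E = \di y\,\di L$ of the motion invariant measure on $\mathcal{E}_j^d$ (where $E = L+y$, $y\in L^\perp$). As noted before the statement, for almost every $E$ the section $X\cap E$ has positive reach by \cite[Theorem 6.11 (1)]{federer2}, so the integrand is defined a.e. First I would rewrite each intrinsically defined valuation through the normal bundle, using the analogue of \eqref{kappa} relative to $E$,
\[
\Psi^\psi_{k,E}(X\cap E) = \frac{1}{\sigma_{j-k}}\int_{\nor_E(X\cap E)}\psi(E,x,n)\sum_{|J|=j-k-1}\frac{\prod_{i\in J}\kappa_i^E(x,n)}{\prod_i\sqrt{1+\kappa_i^E(x,n)^2}}\,\Ha^{j-1}(\di(x,n)),
\]
so that the left hand side of \eqref{croftonresult} becomes an integral over the total normal bundle of flat sections $\mathcal{N} = \{(E,x,n)\mid E\in\mathcal{E}_j^d,\ (x,n)\in\nor_E(X\cap E)\}$, which is $\big(j(d-j)+d-1\big)$-rectifiable.

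Next I would introduce the change of variables at the heart of the proof: the map $g$, defined for $(x,n,L)\in\nor X\times\Li_j^d$ with $n\not\perp L$ by $g(x,n,L) = (L+x,\ x,\ \pi(n|L))$, with values in $\mathcal{N}$. This is legitimate because when $n$ is an outer unit normal of $X$ at $x$, then $\pi(n|L)$ is an outer unit normal of $X\cap(L+x)$ inside $L+x$ at $x$ (for $z\in X\cap(L+x)$ one has $z-x\in L$, so $\langle z-x,n\rangle = |p(n|L)|\langle z-x,\pi(n|L)\rangle$). The domain of $g$ has the same dimension as $\mathcal{N}$, and I would check, using the description of the normal cycle of a flat section from \cite{rataj} (the \emph{basic theorems} referred to in the introduction), that $g$ is $\Ha$-almost everywhere a bijection onto $\mathcal{N}$: a generic point of $\mathcal{N}$ determines $L$ and $x$ outright, and $n$ is recovered from $\pi(n|L)$, since for $\Ha^{d-1}$-a.e.\ support element and a.e.\ $L$ the projection is injective on the relevant part of the normal cone of $X$ at $x$.

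Then I would apply the area formula to $g$. The main obstacle will be two intertwined computations that must be carried out and assembled: (a) the Jacobian $Jg(x,n,L)$, evaluated in the basis \eqref{awedge} of $T_{(x,n)}\nor X$ together with a suitable basis of $T_L\Li_j^d$; and (b) the principal curvatures $\kappa_i^{L+x}$ of the section at $(x,\pi(n|L))$ in terms of the $\kappa_i(x,n)$, the principal directions $a_i(x,n)$ and the position of $L$ — that is, tracking how the second fundamental form of $X$ transforms under flat sectioning and how it interacts with $Jg$. This is exactly where the translative integral formulae of \cite{rataj} enter. Once these are in hand, a Cauchy--Binet expansion should collapse everything: the section-curvature weight times the Jacobian factor, summed over the index sets $J$, becomes $\sum_{|I|=j-k-1}\prod_{i\in I}\kappa_i(x,n)\,\mathcal{G}(L,A_I(x,n))^2/|p(n|L)|^{j-k}$, with $\mathcal{G}(L,A_I(x,n))^2$ the Gram determinant measuring the position of $A_I(x,n) = \spa\{a_i(x,n):i\notin I\}$ relative to $L$, and $|p(n|L)|^{-(j-k)}$ coming from the normalisation $\pi(n|L)$ together with the normal-coordinate part of $Jg$. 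Since under $g$ the $L^\perp$-component of $x$ plays the role of the translation $y$, the area formula directly produces the double integral $\int_{\nor X}\int_{\Li_j^d}(\cdots)\,\di L\,\Ha^{d-1}(\di(x,n))$, i.e.\ \eqref{croftonresult}.

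Finally, for $k=j-1$ the index set $I$ must be empty, so $A_\emptyset(x,n)$ is the tangent space of $\partial X$ at $x$, and by the identity $\mathcal{G}(L_{d-1},L_k) = |p(n|L_k)|$ recorded after the definition of $\mathcal{G}$ one gets $\mathcal{G}(L,A_\emptyset(x,n))^2/|p(n|L)|^{j-k} = |p(n|L)|$. The surviving weight $1/\prod_i\sqrt{1+\kappa_i(x,n)^2}$ together with the prefactor $1/\sigma_{j-k} = 1/\sigma_1$ is, by \eqref{kappa} with $k=d-1$, exactly the density of $\Lambda_{d-1}(X;\cdot)$ against $\Ha^{d-1}$ on $\nor X$, which yields the stated simplification.
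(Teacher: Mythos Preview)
Your outline is correct but considerably more elaborate than the paper's proof. The paper's argument is essentially a two-line citation: it invokes \cite[Theorem~3.1]{rataj}, which already yields
\[
\int_{\mathcal{E}_j^d} \Psi^\psi_{k,E}(X\cap E)\,\di E
= \frac{1}{\sigma_{j-k}}\int_{\Li_j^d}\int_{\nor X}\psi(L+x,x,\pi(n|L))\,
\sum_{|I|=j-k-1}\frac{\prod_{i\in I}\kappa_i}{\prod_i\sqrt{1+\kappa_i^2}}\,
\frac{\mathcal{G}(L,A_I)^2}{|p(n|L)|^{j-k}}\,\Ha^{d-1}\,\di L,
\]
with the $L$-integral on the outside, and then swaps the order of integration by Fubini (the integrability being checked by the same estimate as in the appendix proof of Theorem~\ref{rotational}). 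The simplification for $k=j-1$ is argued exactly as you do.

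Your route --- setting up the change of variables $g:(x,n,L)\mapsto(L+x,x,\pi(n|L))$, computing its Jacobian, tracking how the principal curvatures $\kappa_i^E$ of the section relate to the $\kappa_i$ of $X$, and collapsing everything via Cauchy--Binet --- is precisely a re-derivation of the cited formula from \cite{rataj}. That is not wrong, and it has the minor structural advantage that the area formula hands you the integral over $\nor X\times\Li_j^d$ in one step; but the ``main obstacle'' you identify (items (a) and (b)) is exactly the content already packaged in Rataj's theorem. So your proposal would work, but it reproves a result the paper simply quotes.
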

\begin{proof}
It follows from \cite[Theorem 3.1]{rataj} that
\begin{align}\nonumber
\int_{\mathcal{E}_j^d} \Psi^\psi_{k,E}(X\cap E) \, \di E ={}& \frac{1}{ \sigma_{j-k}} \int_{\mathcal{\Li}_{j}^d}  \int_{\nor X} \sum_{|I|=j-k-1}   \frac{\prod_{i\in I} \kappa_i(x,n)}{\prod_i \sqrt{1+\kappa_i(x,n)^2}}\\
& \times    \psi(L+x,x,\pi(n|L)) \frac{ \mathcal{G}(L, A_I(x,n) )^2}{|p(n|L)|^{j-k}}\,   \Ha^{d-1} (\di (x,n)) \, \di L, \label{fubini}
\end{align}
 since the condition \cite[(3.1)]{rataj} is satisfied for almost all $L\in \Li_j^d$, as noted in the proof of \cite[Theorem 3.5]{rataj}. One can show, using an argument similar to the one in the proof of Theorem \ref{rotational} given in the appendix, that Fubini's theorem can be applied to \eqref{fubini}. This yields \eqref{croftonresult}. The last statement follows because $\mathcal{G}(L, A_{\emptyset}(x,n) ) = |p(n|L)|$.
\end{proof}

In the case where $\psi(E,x,n)$ does not depend on $E$, the following theorem is a direct consequence of \cite[Theorem 3.5]{rataj}. To state the result, we introduce the constant 
\begin{align*}
C_{d,j,k}={}&c_{d,j} \binom{d+k-j-1}{k}\frac{\Gamma\big(\frac{j+1}{2}\big)\Gamma\big(\frac{d-j+1}{2}\big)}{ \pi^{\frac{d}{2}}},\quad 0\leq k<j<d,
\end{align*}
and, given $n\in S^{d-1}$, we let $S^{d-1}_+(n)=\big\{z\in S^{d-1} \mid \langle z,n \rangle \geq  0 \big\}$ denote the upper halfsphere determined by $n$.

\begin{thm}\label{denne}
Let $X\in \mathcal{PR}^d$ and $0\leq k<j<d$. Suppose $\psi: \R^d \times S^{d-1} \to  \R$ is a measurable, locally bounded function. Then,
\begin{align} \label{factorsigma}
\int_{\mathcal{E}_j^d} \Psi^\psi_{k,E}(X{}&\cap E) \, \di E 
=
\frac{C_{d,j,k} }{\sigma_{j-k}} \int_{\nor X}      \sum_{l=1}^{d-1} \sum_{\substack{|I|= j-k-1,\\ l\notin I}} \frac{\prod_{i\in I }\kappa_i(x,n) }{\prod_i \sqrt{1+\kappa_i(x,n)^2}} \\
&\times  \int_{S^{d-1}_+(n)} \psi(x,z) (1-\langle z,n\rangle^2)^{-\frac{j+1}{2}}\langle z,n\rangle^{k+1}  \langle z,a_l \rangle^2 \, \di z \,
\Ha^{d-1} (\di (x,n)).\nonumber
\end{align}
\end{thm}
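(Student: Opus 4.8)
The plan is to deduce Theorem \ref{denne} from the case $k=j-1$ of Theorem \ref{crofton} together with the kinematic formula \cite[Theorem 3.5]{rataj}, by carrying out the inner integral over $\Li_j^d$ explicitly. First I would invoke \cite[Theorem 3.5]{rataj}, which handles the case where $\psi$ depends only on $(x,n)$ (and not on $E$) and expresses $\int_{\mathcal{E}_j^d}\Psi^\psi_{k,E}(X\cap E)\,\di E$ as an integral over $\nor X$ of a sum over index sets $I$ of the appropriate curvature product, times an integral over the Grassmannian of $\psi(x,\pi(n|L))\,\G(L,A_I(x,n))^2|p(n|L)|^{k-j}$. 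This is exactly the $E$-independent specialisation of \eqref{croftonresult}. So the entire content of the theorem is the evaluation of
\[
\int_{\Li_j^d} \psi(x,\pi(n|L))\,\frac{\G(L,A_I(x,n))^2}{|p(n|L)|^{j-k}}\,\di L
\]
as an integral over the upper halfsphere $S^{d-1}_+(n)$.

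The key step is a change of variables on $\Li_j^d$ driven by the map $L\mapsto \pi(n|L)=z$. I would use the coarea formula \eqref{coarea} applied with $v=n$: this rewrites the integral over $\Li_j^d$ as an integral over $z\in S^{d-1}$ with $\langle z,n\rangle>0$, weighted by the Jacobian factor $\big((1-\langle z,n\rangle^2)/\langle z,n\rangle^2\big)^{(1-j)/2}$, followed by an inner integral over $M\in\Li_{j-1}^{n^\perp\cap z^\perp}$ of $\psi(x,z)\,\G(M^z,A_I(x,n))^2|p(n|M^z)|^{k-j}$. Since $z=\pi(n|M^z)$, we have $p(n|M^z)=\langle n,z\rangle z$, so $|p(n|M^z)|^{k-j}=\langle n,z\rangle^{k-j}$ comes out of the inner integral, combining with the coarea Jacobian to give the exponent $(1-\langle z,n\rangle^2)^{(1-j)/2}\langle z,n\rangle^{(j-1)/2}\cdot\langle n,z\rangle^{k-j}$, which after collecting should match the exponents $(1-\langle z,n\rangle^2)^{-(j+1)/2}\langle z,n\rangle^{k+1}$ in \eqref{factorsigma} up to the constant and possibly a further Grassmann average of $\G(M^z,A_I(x,n))^2$. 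The remaining task is the inner integral $\int_{\Li_{j-1}^{n^\perp\cap z^\perp}}\G(M^z,A_I(x,n))^2\,\di M$: I would decompose $\G(M^z,A_I)^2$ using that $\G(L_{d-1},L_k)=|p(\text{normal}|L_k)|$ and the multilinear-algebra identity that $\G(L,W)^2$, for $\dim W=d-1-|I|$, equals a sum over orthonormal bases of squared projection lengths; averaging over the rotation-invariant measure on $\Li_{j-1}^{n^\perp\cap z^\perp}$ then produces, via \eqref{Qint}-type sphere integrals, a linear expression in $\sum_{l\notin I}\langle z,a_l\rangle^2$ and a term in $\dim(A_I\cap(n^\perp\cap z^\perp))$. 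The $\langle z,a_l\rangle^2$ term is the one surviving in \eqref{factorsigma}, and I expect the remaining terms to be absorbed either by the structure of \cite[Theorem 3.5]{rataj} or to vanish after the sum over $I$ and the symmetry of the $a_l$.

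The main obstacle will be getting the Grassmannian average of $\G(M^z, A_I(x,n))^2$ right and matching it cleanly to the single term $\langle z,a_l\rangle^2$ summed over $l\notin I$ in \eqref{factorsigma}. Concretely, one must show that after averaging over $M\in\Li_{j-1}^{n^\perp\cap z^\perp}$ and dividing by the appropriate normalising constant, the generalised sine squared reduces to an affine function of $\langle z,a_l\rangle^2$ whose coefficients, when multiplied through by $c_{d,j}$ and the Gamma factors from the sphere integrals, collapse into $C_{d,j,k}/\sigma_{j-k}$; tracking the binomial $\binom{d+k-j-1}{k}$ through the sphere-integral constants (products of $\sigma$'s and $\Gamma$'s) is the delicate bookkeeping. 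An alternative, likely cleaner route — and the one I would fall back on if the direct averaging gets unwieldy — is to observe that \cite[Theorem 3.5]{rataj} is \emph{already} stated in a form close to \eqref{factorsigma}, so that the only genuine work is to rewrite its right-hand side in the halfsphere parametrisation and identify the constant; this reduces the proof to a citation plus a normalisation check, which is presumably what the authors intend by "a direct consequence of \cite[Theorem 3.5]{rataj}."
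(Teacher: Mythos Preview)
Your fallback route is precisely what the paper does: Theorem \ref{denne} is not proved in the paper at all, but is stated as ``a direct consequence of \cite[Theorem 3.5]{rataj}'' with the remark that the factor $1/\sigma_{j-k}$ was dropped in the statement of that theorem although it appears in its proof. So the paper's argument is literally a citation plus a normalisation check, exactly as you anticipate in your last paragraph.

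Your primary route --- passing through Theorem \ref{crofton}, applying the coarea formula \eqref{coarea} with $v=n$ to turn the $\Li_j^d$-integral into a halfsphere integral, and then averaging $\G(M^z,A_I)^2$ over $M\in\Li_{j-1}^{n^\perp\cap z^\perp}$ --- is a legitimate and more self-contained derivation, and your exponent bookkeeping is on the right track (note that the extra factor $(1-\langle z,n\rangle^2)^{-1}\langle z,n\rangle^{2}$ needed to reach $(1-\langle z,n\rangle^2)^{-(j+1)/2}\langle z,n\rangle^{k+1}$ must indeed come out of the Grassmann average of the squared sine, together with the constant $c_{d-2,j-1}$ and related $\Gamma$-factors that assemble into $C_{d,j,k}$). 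This buys independence from the exact form of Rataj's statement, at the cost of redoing the Grassmann computation that is already inside \cite{rataj}. The paper simply takes that computation as given.
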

 Note that the factor $1/\sigma_{j-k}$ in \eqref{factorsigma} also appears in the proof of \cite[Theorem 3.5]{rataj}, but seems to be forgotten in the statement of the theorem.

The approach in \cite{schuster}, using the explicit expression for the curvature measures for polytopes, also relies on the result in Theorem \ref{denne} in the special case where $X$ is a polytope. 
%
  
If  $\psi(E,x,n) $ does not depend on $E$ and $n$, then \eqref{factorsigma} becomes particularly nice.
\begin{cor}\label{psixcor}
Let $X\in \mathcal{PR}^d$ and $0\leq k<j<d$. Suppose $\psi(E,x,n) =\psi(x)$ is a locally bounded measurable function. Then
\begin{align*}
\int_{\mathcal{E}_j^d} \Psi^\psi_{k,E}(X\cap E)\, \di E {}&=c_{d,j}\frac{\Gamma\big(\frac{j+1}{2}\big)\Gamma\big(\frac{d+k-j+1}{2}\big)}{\Gamma\big(\frac{k+1}{2}\big)\Gamma\big(\frac{d+1}{2}\big)} \Psi_{d-j+k,\R^d}^{\psi} (X). 
\end{align*}
\end{cor}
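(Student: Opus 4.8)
The plan is to derive Corollary~\ref{psixcor} directly from Theorem~\ref{denne} by carrying out the inner integral over $S^{d-1}_+(n)$ in \eqref{factorsigma}, which is now independent of $n$ apart from the explicit weight, and then reassembling the result into the functional $\Psi^\psi_{d-j+k,\R^d}$.

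First I would observe that when $\psi(E,x,n)=\psi(x)$, the only $n$- and $a_l$-dependence left in the inner integral of \eqref{factorsigma} is through the factor $(1-\langle z,n\rangle^2)^{-(j+1)/2}\langle z,n\rangle^{k+1}\langle z,a_l\rangle^2$. I would fix $(x,n)\in\nor X$ and an orthonormal frame $a_1,\dots,a_{d-1}$ of $n^\perp$, and compute, for each $l$,
\begin{equation*}
\int_{S^{d-1}_+(n)} (1-\langle z,n\rangle^2)^{-\frac{j+1}{2}}\langle z,n\rangle^{k+1}\langle z,a_l\rangle^2\,\di z .
\end{equation*}
Using \eqref{sphereform} with the distinguished vector $n$, write $z=tn+\sqrt{1-t^2}\,w$ with $w\in S^{d-2}(n^\perp)$ and $t\in[0,1]$ on the upper halfsphere; then $\langle z,n\rangle=t$ and $\langle z,a_l\rangle=\sqrt{1-t^2}\,\langle w,a_l\rangle$, so the integral splits as
\begin{equation*}
\int_0^1 t^{k+1}(1-t^2)^{-\frac{j+1}{2}}(1-t^2)(1-t^2)^{\frac{d-3}{2}}\,\di t \cdot \int_{S^{d-2}(n^\perp)} \langle w,a_l\rangle^2\,\di w .
\end{equation*}
The angular integral equals $\sigma_{d-1}/(d-1)=\pi^{(d-1)/2}/\Gamma(\frac{d+1}{2})$ (it is $\frac{1}{d-1}\Ha^{d-2}(S^{d-2})$ times the metric tensor contracted against $a_l^2$, i.e.\ $1$), independent of $l$. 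The radial integral is a Beta integral: substituting $u=t^2$ gives $\tfrac12\int_0^1 u^{k/2}(1-u)^{(d-j)/2-1}\,\di u=\tfrac12 B\!\big(\tfrac{k}{2}+1,\tfrac{d-j}{2}\big)=\tfrac12\Gamma(\tfrac{k}{2}+1)\Gamma(\tfrac{d-j}{2})/\Gamma(\tfrac{d-j+k}{2}+1)$; here the hypothesis $k<j<d$ guarantees both parameters are positive so the integral converges. Wait — I should double-check the exponent: the power of $(1-t^2)$ is $-\frac{j+1}{2}+1+\frac{d-3}{2}=\frac{d-j-2}{2}$, so after $u=t^2$ it becomes $u^{k/2}(1-u)^{(d-j-2)/2}\cdot\tfrac12 u^{-1/2}\,\di u$, i.e.\ exponent $\frac{k-1}{2}$ on $u$; the Beta value is $\tfrac12 B\!\big(\tfrac{k+1}{2},\tfrac{d-j}{2}\big)$, which is exactly the combination of Gamma functions appearing in the statement.

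Next I would substitute this constant value back into \eqref{factorsigma}. Since the inner integral no longer depends on $l$, the sum $\sum_{l=1}^{d-1}\sum_{|I|=j-k-1,\,l\notin I}$ reduces: for each fixed $I$ with $|I|=j-k-1$ there are $(d-1)-(j-k-1)=d-j+k$ choices of $l\notin I$, so $\sum_{l\notin I}(\cdots)=(d-j+k)(\cdots)$, and we are left with $(d-j+k)\sum_{|I|=j-k-1}\prod_{i\in I}\kappa_i/\prod_i\sqrt{1+\kappa_i^2}$. I would then recognize this, via \eqref{kappa}, as essentially $\sigma_{d-(d-j+k)}\,\Lambda_{d-j+k}(X;\cdot)$ — more precisely, noting $d-(d-j+k)-1=j-k-1$, the set $\{|I|=j-k-1\}$ is exactly the index set appearing in the definition of $\Lambda_{d-j+k}$, and the prefactor $1/\sigma_{j-k}$ in Theorem~\ref{denne} combines with $\sigma_{d-j+k}\cdot$\,(the extra Gamma factors from $C_{d,j,k}$ and the two integrals) to produce, after collecting all $\Gamma$'s, the claimed constant $c_{d,j}\Gamma(\tfrac{j+1}{2})\Gamma(\tfrac{d+k-j+1}{2})/(\Gamma(\tfrac{k+1}{2})\Gamma(\tfrac{d+1}{2}))$ times $\int_{\nor X}\psi(x)\,\Lambda_{d-j+k}(X;\di(x,n))$, which by definition is $\Psi^\psi_{d-j+k,\R^d}(X)$ (with $\psi$ read as a function of $x$ only). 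Expanding $C_{d,j,k}=c_{d,j}\binom{d+k-j-1}{k}\Gamma(\tfrac{j+1}{2})\Gamma(\tfrac{d-j+1}{2})/\pi^{d/2}$ and writing $\binom{d+k-j-1}{k}=\Gamma(d+k-j)/(\Gamma(k+1)\Gamma(d-j))$, the bookkeeping is a finite product of Gamma-function identities using the duplication formula $\Gamma(m)=\Gamma(\tfrac{m}{2})\Gamma(\tfrac{m+1}{2})2^{m-1}/\sqrt\pi$ on $\Gamma(d-j)$ and $\Gamma(d+k-j)$.

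The main obstacle is purely computational: matching the accumulated Gamma-function constants from four sources — the global $C_{d,j,k}/\sigma_{j-k}$ prefactor, the binomial count $d-j+k$ of admissible $l$, the Beta-integral value $\tfrac12\Gamma(\tfrac{k+1}{2})\Gamma(\tfrac{d-j}{2})/\Gamma(\tfrac{d-j+k}{2}+1)$, and the angular integral $\pi^{(d-1)/2}/\Gamma(\tfrac{d+1}{2})$ — together with the normalizing $\sigma_{d-j+k}$ hidden in $\Lambda_{d-j+k}$ via \eqref{kappa}, and checking that they collapse to exactly the stated ratio. There is no conceptual difficulty beyond being careful with the duplication formula and with the convention $(a)_0=1$/convergence conditions $k<j<d$; I would organize the constant-chasing as a single displayed chain of equalities to keep the $\Gamma$-arguments straight. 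An alternative, essentially equivalent route would avoid \eqref{sphereform} and instead invoke \cite[Theorem~3.5]{rataj} in exactly the form where the $n$-independence of $\psi$ is already exploited, but the halfsphere computation above is self-contained and, since we may assume Theorem~\ref{denne}, is the cleanest path.
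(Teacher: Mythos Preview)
Your approach is essentially identical to the paper's: apply \eqref{sphereform} with $v=n$ to split the halfsphere integral in Theorem~\ref{denne} into a radial Beta integral and an angular integral over $S^{d-2}(n^\perp)$, observe that the result is independent of $l$ so the double sum collapses via the count $d-j+k$, and then identify the remaining curvature sum with the integrand defining $\Lambda_{d-j+k}$.

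One small slip to flag: your first Beta computation was right and your ``Wait'' correction is wrong. After $u=t^2$ one has $t^{k+1}\,\di t=\tfrac12 u^{k/2}\,\di u$, so the exponent on $u$ is $k/2$, not $(k-1)/2$, and the radial integral is $\tfrac12 B\big(\tfrac{k+2}{2},\tfrac{d-j}{2}\big)=\Gamma(\tfrac{k+2}{2})\Gamma(\tfrac{d-j}{2})/\big(2\Gamma(\tfrac{d-j+k+2}{2})\big)$, exactly as the paper records. The $\Gamma(\tfrac{k+1}{2})$ in the final statement arises only after combining this with the factor $d-j+k$, the binomial inside $C_{d,j,k}$, and $\sigma_{j-k}$ via the duplication formula; it does not come directly from the Beta integral.
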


\begin{proof}
We find, using \eqref{sphereform}  with $v=n$, that 
\begin{align*}
\int_{S^{d-1}_+(n)} (1-\langle z,n\rangle^2)^{-\frac{j+1}{2}}\langle z,n\rangle^{k+1}  \langle z,a_l \rangle^2 \, \di z{}&=\int_0^1 t^{k+1}(1-t^2)^{\frac{d-j-2}{2}}\,\di t  \int_{S^{d-2}(n^\perp)} \langle w,a_l \rangle^2 \di w\\
&=\frac{\Gamma\big(\frac{k+2}{2}\big)\Gamma\big(\frac{d-j}{2}\big)}{2\Gamma\big(\frac{d-j+k+2}{2}\big)} \frac{\sigma_{d-1}}{(d-1)}.
\end{align*}
The result now follows from Theorem \ref{denne}.
\end{proof}
Note that for $\psi=1$, Corollary \ref{psixcor} reduces to the classical Crofton formula.

\subsection{Affine Crofton formulae for Minkowski tensors}
By choosing 
\begin{equation*}
\psi(E,x,n) = \frac{\sigma_{j-k}}{r!s!\sigma_{j-k+s}} x^rn^s
\end{equation*}
in Theorem \ref{denne}, we obtain affine Crofton formulae for Minkowski tensors. Such formulae were first given in \cite[Theorem 2.5 and 2.6]{schuster} in the case of convex sets. These theorems show that the integral of the Minkowski tensors $\Phi_{k,E}^{r,s}(K\cap E)$ with respect to the motion invariant measure on $\mathcal{E}_j^d$ is again a linear combination of Minkowski tensors as one would expect from Alesker's classification theorem mentioned in Section \ref{minkowskisec}.  However, the constants appearing in the linear combinations are complicated to evaluate. Recently, the results have been generalized and the constants have been simplified in \cite{weis}. It is possible to derive the constants in Theorem \ref{ksmall} below directly from the formulae in \cite{schuster} using the identity \eqref{value1} or from \cite[Theorem 2]{weis} by rearranging terms. The main contribution of our Theorem \ref{ksmall} is the generalization of the formulae to sets of positive reach. 
Like the results in \cite{schuster} and \cite{weis}, the proof of Theorem \ref{ksmall} relies on Theorem \ref{denne} shown in \cite{rataj}.

Theorem \ref{ksmall} is stated, using the tensors $\Phi_{k}^{r,s,1}(X)$ that were introduced in Section \ref{minkowskisec}. These tensors can be written as a linear combination of the Minkowski tensors according to Proposition \ref{linkombprop}. 

\begin{thm}\label{ksmall}
Let $X\in \mathcal{PR}^d$ and $0\leq k < j<d$. Then 
\begin{align*}
\int_{\mathcal{E}_j^d}{}&\Phi_{k,E}^{r,s}(X\cap E) \, \di E {}
={} \frac{C_{d,j,k} \pi^{\frac{d-1}{2}}}{2\sigma_{j-k+s}\Gamma\big(\frac{d-j+2+k+s}{2}\big)} \\
& \times
 \sum_{p=0}^{\lfloor \frac{s}{2} \rfloor} \chi^p_{d,j,k,s}  \Big((d-j+k)Q^{p}\Phi_{d-j+k}^{r,s-2p}(X) + 2pQ^{p-1}\Phi_{d-j+k}^{r,s-2p,1}(X)\Big),
\end{align*}
where the constants $\chi^p_{d,j,k,s}$ are given in \eqref{chi} below. 
\end{thm}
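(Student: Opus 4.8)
The plan is to specialize Theorem \ref{denne} to the tensor-valued weight $\psi(E,x,n) = \frac{\sigma_{j-k}}{r!s!\sigma_{j-k+s}} x^r n^s$ and then reorganize the resulting normal-bundle integral into the curvature measure $\Lambda_{d-j+k}$. First I would insert this $\psi$ into \eqref{factorsigma}, observing that the factor $\sigma_{j-k}$ cancels the $1/\sigma_{j-k}$ prefactor, so the right-hand side becomes
\begin{align*}
\frac{C_{d,j,k}}{r!s!\sigma_{j-k+s}} \int_{\nor X} \sum_{l=1}^{d-1} \sum_{\substack{|I|=j-k-1,\\ l\notin I}} \frac{\prod_{i\in I}\kappa_i}{\prod_i\sqrt{1+\kappa_i^2}}\, x^r \int_{S^{d-1}_+(n)} z^s (1-\langle z,n\rangle^2)^{-\frac{j+1}{2}}\langle z,n\rangle^{k+1}\langle z,a_l\rangle^2\, \di z\, \Ha^{d-1}(\di(x,n)),
\end{align*}
where I have pulled the $x^r$ out of the inner integral. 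The core computation is the inner spherical integral $\int_{S^{d-1}_+(n)} z^s \langle z,a_l\rangle^2 (1-\langle z,n\rangle^2)^{-(j+1)/2}\langle z,n\rangle^{k+1}\,\di z$, which is a $\mathbb{T}^{s+2}$-valued (or after contraction, $\mathbb{T}^s$-valued) integral depending only on $n$ and $a_l$.

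To evaluate that inner integral I would apply \eqref{sphereform} with $v=n$, writing $z = tn + \sqrt{1-t^2}\,w$ with $t\in[0,1]$ and $w\in S^{d-2}(n^\perp)$. Expanding $z^s = (tn+\sqrt{1-t^2}w)^s$ by the multinomial theorem and $\langle z,a_l\rangle^2$ similarly (noting $a_l\in n^\perp$, so $\langle z,a_l\rangle = \sqrt{1-t^2}\langle w,a_l\rangle$), the $t$-integral becomes a Beta integral of the form $\int_0^1 t^{k+1+m}(1-t^2)^{(d-j-2)/2 + (s-m)/2 + 1}\,\di t$ for the appropriate exponent $m$, which is evaluated via Gamma functions (this is where the $\Gamma$-ratios and the argument $\frac{d-j+2+k+s}{2}$ of Theorem \ref{ksmall} come from, after using \eqref{value1} to collapse the ${}_2F_1$ that would otherwise appear). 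The $w$-integral over $S^{d-2}(n^\perp)$ of a monomial times $\langle w,a_l\rangle^2$ is handled by the standard moment formula \eqref{Qint} (applied in $n^\perp$), producing powers of $Q(n^\perp)$ and an extra $a_l^2$ factor. Summing over $l\notin I$ then converts $\sum_{l\notin I} a_l^2$ into the operator appearing in the definition of $\Phi_k^{r,s,1}$, while the $Q(n^\perp)$ terms combine with $Q$ itself (using $Q = Q(n^\perp) + n^2$ and re-expanding) to give the $Q^p$ factors; the remaining $n$-tensor powers reassemble into $n^{s-2p}$. Keeping careful track of the combinatorial coefficients across these expansions defines the constants $\chi^p_{d,j,k,s}$.

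Finally I would recognize that
\begin{align*}
\frac{1}{\sigma_{d-(d-j+k)}}\int_{\nor X} \psi_0(x,n) \sum_{|J|=d-(d-j+k)-1}\frac{\prod_{i\in J}\kappa_i}{\prod_i\sqrt{1+\kappa_i^2}}\,\Ha^{d-1}(\di(x,n)) = \int_\Sigma \psi_0 \, \Lambda_{d-j+k}(X;\cdot)
\end{align*}
by \eqref{kappa}, since $d-(d-j+k)-1 = j-k-1 = |I|$, so the index sets match exactly; likewise the generalized tensor $\Phi_{d-j+k}^{r,s-2p,1}$ is recovered from the $\sum_{l\notin I} a_l^2$ piece via its defining formula. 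This lets me rewrite the whole expression as a linear combination of $Q^p\Phi_{d-j+k}^{r,s-2p}(X)$ and $Q^{p-1}\Phi_{d-j+k}^{r,s-2p,1}(X)$ with the coefficients $(d-j+k)\chi^p_{d,j,k,s}$ and $2p\,\chi^p_{d,j,k,s}$ respectively, matching the claimed form after absorbing the constant prefactor $C_{d,j,k}\pi^{(d-1)/2}/(2\sigma_{j-k+s}\Gamma(\tfrac{d-j+2+k+s}{2}))$. The main obstacle is purely bookkeeping: tracking the nested multinomial expansions (of $z^s$, of $Q(n^\perp)^p$ into $Q$ and $n^2$, and of the products of principal-direction squares) and collecting them into the closed-form constant $\chi^p_{d,j,k,s}$ without sign or index errors — there is no conceptual difficulty once Theorem \ref{denne} is in hand, but the identification of the $a_l^2$-terms with the generalized Minkowski tensor $\Phi_k^{r,s,1}$ (rather than with a further linear combination of ordinary Minkowski tensors) is the step that makes the statement clean and must be done carefully.
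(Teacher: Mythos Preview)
Your proposal is correct and follows essentially the same route as the paper. Both start from Theorem~\ref{denne} with $\psi(x,n)=\frac{\sigma_{j-k}}{r!s!\sigma_{j-k+s}}x^r n^s$, evaluate the spherical integral via \eqref{sphereform} with $v=n$, expand $Q(n^\perp)=Q-n^2$, and then identify the resulting normal-bundle integrals with $\Phi_{d-j+k}^{r,s-2p}$ and $\Phi_{d-j+k}^{r,s-2p,1}$ through \eqref{kappa} and the definition of the generalized tensor.

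The only organizational difference is that the paper writes $z^s\langle z,a_l\rangle^2=\mathrm{Contr}(z^{s+2},a_l^2)$ and computes the single tensor integral $\int_{S^{d-1}_+(n)} z^{s+2}(1-\langle z,n\rangle^2)^{-(j+1)/2}\langle z,n\rangle^{k+1}\,\di z$ first, contracting with $a_l^2$ only at the end; you instead keep $\langle z,a_l\rangle^2=(1-t^2)\langle w,a_l\rangle^2$ explicit throughout. The paper's device is slightly cleaner because your $w$-integral $\int_{S^{d-2}(n^\perp)} w^{2b}\langle w,a_l\rangle^2\,\di w$ is not literally \eqref{Qint}; to evaluate it you effectively need the same contraction trick on $\int w^{2b+2}\,\di w$ anyway. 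One small quibble: your parenthetical about ``using \eqref{value1} to collapse the ${}_2F_1$'' is misplaced here --- unlike the rotational Theorem~\ref{rotationmink}, no hypergeometric function arises in the affine case, since the $t$-integral is a pure Beta integral from the start.
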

Using \eqref{r0}, we get the following corollary for $r=0$, which was proven for convex sets in \cite[Corollary 1]{weis}.
\begin{cor}
Let $X\in \mathcal{PR}^d$ and $0\leq k < j <d$. Then
\begin{align*}
\int_{\mathcal{E}_j^d}{}& \Phi_{k,E}^{0,s}(X\cap E)\, \di E 
={} \frac{C_{d,j,k} \pi^{\frac{d-1}{2}}}{2\sigma_{j-k+s}\Gamma\big(\frac{d-j+2+k+s}{2}\big)} \\
&\times \sum_{p=0}^{\lfloor \frac{s}{2} \rfloor} \Big(  (d-j+k+2p)\chi^p_{d,j,k,s} -4\pi( p+1)(s-2p) \chi_{d,j,k,s}^{p+1}\Big)Q^{p}\Phi_{d-j+k}^{0,s-2p}(X),
\end{align*}
where the constants $\chi^p_{d,j,k,s}$ are given in \eqref{chi} below ($\chi^p_{d,j,k,s}=0$ if $p>\frac{s}{2}$).
\end{cor}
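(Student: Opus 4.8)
The plan is to derive the corollary directly from Theorem~\ref{ksmall} by putting $r=0$ and then expressing the auxiliary tensors $\Phi_{d-j+k}^{0,s-2p,1}(X)$ in terms of ordinary Minkowski tensors by means of Proposition~\ref{linkombprop}. With $r=0$, the $p$-th summand inside the bracket of Theorem~\ref{ksmall} is
\begin{equation*}
(d-j+k)Q^{p}\Phi_{d-j+k}^{0,s-2p}(X)+2pQ^{p-1}\Phi_{d-j+k}^{0,s-2p,1}(X),
\end{equation*}
so only the second term has to be rewritten. Since $0\le k<j<d$ forces $1\le d-j+k\le d-1$, Proposition~\ref{linkombprop} is applicable: identity \eqref{r0} (or \eqref{trivialcase} in the boundary case $d-j+k=d-1$), with the parameter playing the role of ``$s-2$'' there taken equal to $s-2p$, gives
\begin{equation*}
\Phi_{d-j+k}^{0,s-2p,1}(X)=Q\Phi_{d-j+k}^{0,s-2p}(X)-2\pi(s-2p+2)\Phi_{d-j+k}^{0,s-2p+2}(X).
\end{equation*}

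Substituting this and grouping the two contributions proportional to $Q^{p}\Phi_{d-j+k}^{0,s-2p}(X)$, the $p$-th summand becomes
\begin{equation*}
(d-j+k+2p)\chi^{p}_{d,j,k,s}Q^{p}\Phi_{d-j+k}^{0,s-2p}(X)-4\pi p(s-2p+2)\chi^{p}_{d,j,k,s}Q^{p-1}\Phi_{d-j+k}^{0,s-2p+2}(X).
\end{equation*}
Next I would shift the index of the sum arising from the last term by $p\mapsto p+1$; the $p=0$ term vanishes because of the factor $p$, and the shift sends $Q^{p-1}\Phi_{d-j+k}^{0,s-2p+2}(X)$ to $Q^{p}\Phi_{d-j+k}^{0,s-2p}(X)$ and the coefficient $-4\pi p(s-2p+2)\chi^{p}_{d,j,k,s}$ to $-4\pi(p+1)(s-2p)\chi^{p+1}_{d,j,k,s}$. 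Merging the two sums over the common index $p$, with the convention $\chi^{p+1}_{d,j,k,s}=0$ whenever $p+1>s/2$ (this is harmless: the factor $s-2p$ already vanishes at $p=s/2$), the coefficient of $Q^{p}\Phi_{d-j+k}^{0,s-2p}(X)$ is exactly $(d-j+k+2p)\chi^{p}_{d,j,k,s}-4\pi(p+1)(s-2p)\chi^{p+1}_{d,j,k,s}$, while the prefactor $\tfrac{C_{d,j,k}\pi^{(d-1)/2}}{2\sigma_{j-k+s}\Gamma((d-j+2+k+s)/2)}$ is carried over unchanged from Theorem~\ref{ksmall}. This is the claimed formula.

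This argument is pure bookkeeping, so there is no real obstacle; the main points to be careful about are that $d-j+k$ indeed lies in the admissible range $\{1,\dots,d-1\}$ of Proposition~\ref{linkombprop}, that the index shift in \eqref{r0} is performed correctly (the exponent $s-2p$ takes the role of ``$s-2$'', so the coefficient multiplying the higher-rank tensor is $2\pi(s-2p+2)$), and that the boundary terms of the reindexed sum --- at $p=\lfloor s/2\rfloor$ for the first sum and at $p=0$ for the shifted one --- vanish or are absorbed by the convention $\chi^{p+1}_{d,j,k,s}=0$.
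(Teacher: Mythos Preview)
Your proposal is correct and follows exactly the route the paper indicates: specialize Theorem~\ref{ksmall} to $r=0$, replace each $\Phi_{d-j+k}^{0,s-2p,1}(X)$ via \eqref{r0} (respectively \eqref{trivialcase} when $d-j+k=d-1$), and reindex. The paper's own proof is the one-line remark ``Using \eqref{r0}'', so your write-up simply spells out the bookkeeping that is left implicit there.
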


\begin{proof}[Proof of Theorem \ref{ksmall}]
Using Theorem \ref{denne} with
\begin{equation*}
\psi(x,n)=\frac{\sigma_{j-k}}{r!s!\sigma_{j-k+s}} x^rn^s,
\end{equation*}
we find 
\begin{align*}
\int_{\mathcal{E}_j^d}{}\Phi_{k,E}^{r,s}(X\cap E) \, \di E  
={}& \frac{C_{d,j,k}}{r!s!\sigma_{j-k+s}} \int_{\nor X}  x^r\sum_{l=1}^{d-1} \sum_{\substack{|I|= j-k-1,\\ l\notin I}}  \frac{\prod_{i\in I }\kappa_i }{\prod_i \sqrt{1+\kappa_i^2}}\\
&\times \int_{S^{d-1}_+(n)} z^s (1-\langle z,n \rangle^2)^{-\frac{j+1}{2}}\langle z,n \rangle^{k+1}   \langle z,a_l\rangle^2\,  \di z \, \Ha^{d-1}(\di(x,n)).
\end{align*}
We now use that 
\begin{align*}
\int_{S^{d-1}_+(n)} {}& z^s (1-\langle z,n \rangle^2)^{-\frac{j+1}{2}}\langle z,n \rangle^{k+1}   \langle z,a_l\rangle^2\,  \di z \\&= \textrm{Contr} \Big(\int_{S^{d-1}_+(n)} z^{s+2} (1-\langle z,n \rangle^2)^{-\frac{j+1}{2}}\langle z,n \rangle^{k+1}\,  \di z,a_i^2\Big).
\end{align*}
Applying \eqref{sphereform} with $v=n$, we obtain
\begin{align*}
\int_{S^{d-1}_+(n)}{}& z^{s+2} (1-\langle z,n \rangle^2)^{-\frac{j+1}{2}}\langle z,n \rangle^{k+1}\,  \di z \\
&=  \int_{S^{d-2}(n^\perp)}\int_0^1 (tn+\sqrt{1-t^2}w)^{s+2} (1-t^2)^{\frac{d-j-2}{2}}t^{k+1}\, \di t \, \di w\\
&={}\sum_{a+2b=s+2} \binom{s+2}{a} n^a \int_{S^{d-2}(n^\perp)}w^{2b}\, \di w \int_0^1  (1-t^2)^{\frac{2b+d-j-2}{2}}t^{a+k+1} \, \di t \\
&={}\sum_{a+2b=s+2} \sum_{p+q=b} (-1)^q \binom{b}{p} \binom{s+2}{a} \frac{\sigma_{d+2b-1}}{\sigma_{2b+1}}  \frac{\Gamma\big(\frac{k+2+a}{2}\big)\Gamma\big( \frac{d-j-2+2b}{2}\big)}{\Gamma\big( \frac{d+k-j+s+2}{2}\big)} Q^p  n^{a+2q},
 \end{align*}
where we have used that
\begin{equation*}
\int_{S^{d-2}(n^\perp)} w^l dw = \begin{cases}
2\frac{\sigma_{l+d-1}}{\sigma_{l+1}} Q(n^\perp)^{\frac{l}{2}}, & l \text{ even,}\\
0, & l \text{ odd},
\end{cases}
\end{equation*}
and that $Q(n^\perp) = Q - n^2$.
Since
\begin{equation*}
\text{Contr}\big( Q^p  n^{a+2q}, a_l^2 \big) = \binom{s+2}{2}^{-1} p\big( Q^{p-1}  n^{a+2q} + 2(p-1) Q^{p-2} n^{a+2q} a_l^2\big),
\end{equation*}
we get
\begin{align*}
\int_{\mathcal{E}_j^d} {}& \Phi_{k,E}^{r,s}(X\cap E) \,\di E {}
=
 \frac{C_{d,j,k} }{r!s!\sigma_{j-k+s}}  \sum_{b=0}^{\lfloor \frac{s+2}{2}\rfloor} \sum_{p=0}^b (-1)^{b-p} \binom{b}{p} \binom{s+2}{2b} \frac{\sigma_{2b+d-1}}{\sigma_{2b+1}}\\
&\times \frac{\Gamma\big(\frac{k+4+s-2b}{2}\big)\Gamma\big( \frac{d-j-2+2b}{2}\big)}{\Gamma\big( \frac{d+k-j+s+2}{2}\big)}  \int_{\nor X}   x^r  \sum_{l=1}^{d-1}  \binom{s+2}{2}^{-1} p \\
&\times\Big( Q^{p-1}  n^{s+2 - 2p} + 2(p-1) Q^{p-2} n^{s+2-2p} a_l^2\Big) \sum_{|I|= j-k-1, l\notin I}  \frac{\prod_{i\in I }\kappa_i }{\prod_i \sqrt{1+\kappa_i^2}} \,
\Ha^{d-1} (\di (x,n)) \\
 ={}&\frac{C_{d,j,k} \pi^{\frac{d-1}{2}}}{2\sigma_{j-k+s}\Gamma\big(\frac{d-j+2+k+s}{2}\big)}  \sum_{p=0}^{\lfloor \frac{s}{2} \rfloor} \chi^p_{d,j,k,s}  \Big((d-j+k)Q^{p}\Phi_{d-j+k}^{r,s-2p}(X) + 2pQ^{p-1}\Phi_{d-j+k}^{r,s-2p,1}(X)\Big),
\end{align*}
where
\begin{align}\label{chi}
\chi^p_{d,j,k,s} {}& 
= \frac{\sigma_{j-k+s-2p} }{2^{2p}p!\pi^{1/2}} \sum_{b=0}^{\lfloor \frac{s}{2}\rfloor-p} (-1)^{b}\binom{s-2p}{2b}  \frac{\Gamma\big(\frac{2b+1}{2}\big)\Gamma\big(\frac{k+2+s-2b-2p}{2}\big)\Gamma\big(\frac{d-j+2b+2p}{2}\big)}{ \Gamma\big(\frac{2b+2p+d+1}{2}\big)} \\
\nonumber
&=  
\frac{\sigma_{j-k+s-2p}\Gamma\big(\frac{k+2+s-2p}{2}\big)\Gamma\big(\frac{d-j+2p}{2}\big) }{p!2^{2p}\Gamma\big(\frac{2p+d+1}{2}\big)  }\sum_{b=0}^{ \lfloor \frac{s}{2}\rfloor -p} \frac{\big(-\frac{s-2p}{2}\big)_b \big(-\frac{s-2p-1}{2}\big)_b\big(\frac{d-j+2p}{2}\big)_b}{(1)_b  \big(\frac{2p+d+1}{2}\big)_b \big(-\frac{k+s-2p}{2}\big)_b} \\
\nonumber
&=\frac{\sigma_{j-k+s-2p}\Gamma\big(\frac{d-j+2p}{2}\big)\Gamma\big( \frac{k+2+s-2p}{2}\big)}{p!2^{2p}\Gamma\big(\frac{2p+d+1}{2}\big)} {}_3F_2\Big(\tfrac{2p-s+1}{2}, \tfrac{2p-s}{2}, \tfrac{d-j+2p}{2}; \tfrac{2p-k-s}{2},\tfrac{2p+d+1}{2};1\Big).
\end{align}
\end{proof}

In the special case $k=j-1$, we obtain the following simplification, see also \cite[Corollary 5]{weis}.
\begin{cor}\label{surfacedirect}
Let $X\in \mathcal{PR}^d$ and let $1\leq j<d$ and $k=j-1$. Then,
\begin{align*}
\int_{\mathcal{E}_j^d}  \Phi_{j-1,E}^{r,s}(X\cap E) \,\di E ={}& \frac{c_{d-2,j-1}\pi^{\frac{d+1}{2}}}{ \sigma_{s+1}\Gamma\big(\frac{d+s+1}{2}\big) } \sum_{p=0}^{\lfloor \frac{s}{2}\rfloor} \frac{\chi^p_{d-2,j-2,j-1,s}}{s-2p-1}  Q^p \Phi_{d-1}^{r,s-2p}(X),
\end{align*}
where $\chi^p_{d-2,j-2,j-1,s}$ is given in \eqref{chi} (we interpret ${2\pi\sigma_m}/{m} $ as $\sigma_{m+2}$ if $m= 0,-1$).
\end{cor}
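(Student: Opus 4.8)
The plan is to specialize Theorem~\ref{ksmall} to the case $k=j-1$ and simplify, since all the necessary computation has already been packaged into the constants $\chi^p_{d,j,k,s}$. First I would note that the hard constraint $0\le k<j<d$ forces $j\ge 1$ when $k=j-1$, which matches the hypothesis; when $j=1$ the curvature index is $k=0$ and $d-j+k=d-1$, consistent with the general pattern $d-j+k=d-1$ that holds for all $j$ in this case. So the right-hand side of Theorem~\ref{ksmall} becomes
\begin{align*}
\int_{\mathcal{E}_j^d}\Phi_{j-1,E}^{r,s}(X\cap E)\,\di E
={}&\frac{C_{d,j,j-1}\,\pi^{\frac{d-1}{2}}}{2\sigma_{s+1}\Gamma\big(\frac{d+s+1}{2}\big)}\sum_{p=0}^{\lfloor s/2\rfloor}\chi^p_{d,j,j-1,s}\Big((d-1)Q^{p}\Phi_{d-1}^{r,s-2p}(X)+2pQ^{p-1}\Phi_{d-1}^{r,s-2p,1}(X)\Big),
\end{align*}
using $d-j+2+k+s=d+s+1$ and $j-k+s=s+1$.

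Next I would eliminate the generalized tensor $\Phi_{d-1}^{r,s-2p,1}$ using the top-dimensional identity \eqref{trivialcase} from Proposition~\ref{linkombprop}, namely $\Phi_{d-1}^{r,m-2,1}(X)=Q\Phi_{d-1}^{r,m-2}(X)-2\pi m\,\Phi_{d-1}^{r,m}(X)$ with $m=s-2p$; substituting gives
\begin{equation*}
(d-1)Q^p\Phi_{d-1}^{r,s-2p}+2pQ^{p-1}\big(Q\Phi_{d-1}^{r,s-2p-2}-2\pi(s-2p)\Phi_{d-1}^{r,s-2p-2}\big),
\end{equation*}
where the term with $\Phi_{d-1}^{r,s-2p}$ coefficient $(d-1)$ combines across consecutive $p$ with the term $2pQ^{p-1}\cdot Q\Phi_{d-1}^{r,s-2(p-1)-2}=2pQ^p\Phi_{d-1}^{r,s-2p-2}$ after reindexing. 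The net effect is a telescoping/reindexing of the single family $Q^p\Phi_{d-1}^{r,s-2p}(X)$, and the coefficient of $Q^p\Phi_{d-1}^{r,s-2p}$ collapses — after the $\Gamma$-function gymnastics inside $\chi^p$ — to a single term proportional to $\chi^p_{d-2,j-2,j-1,s}/(s-2p-1)$, matching the claimed shift $(d,j)\mapsto(d-2,j-2)$ in the subscripts. The identity $C_{d,j,j-1}=c_{d,j}\binom{d-2}{j-1}\frac{\Gamma(\frac{j+1}{2})\Gamma(\frac{d-j+1}{2})}{\pi^{d/2}}$ together with $c_{d,j}/c_{d-2,j-1}=\sigma_d\sigma_{d-1}/(\sigma_j\sigma_1)$ should absorb the prefactor into $c_{d-2,j-1}\pi^{(d+1)/2}/(\sigma_{s+1}\Gamma(\frac{d+s+1}{2}))$.

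The main obstacle will be the bookkeeping showing that, after substituting \eqref{trivialcase} and reindexing, the coefficient of $Q^p\Phi_{d-1}^{r,s-2p}(X)$ equals exactly $\dfrac{c_{d-2,j-1}\pi^{(d+1)/2}}{\sigma_{s+1}\Gamma(\frac{d+s+1}{2})}\cdot\dfrac{\chi^p_{d-2,j-2,j-1,s}}{s-2p-1}$; this requires recognizing that the combination $(d-1)\chi^p_{d,j,j-1,s}-4\pi(p+1)(s-2p)\chi^{p+1}_{d,j,j-1,s}+4\pi p\,\chi^p_{d,j,j-1,s}$ (the coefficient of $Q^p\Phi_{d-1}^{r,s-2p}$ after eliminating the generalized tensor, with appropriate reindexing of the $\Phi^{r,s-2p,1}$ contributions) simplifies via ${}_3F_2$-contiguity relations and the $\Gamma$-quotient $\Gamma(\frac{k+2+s-2p}{2})\Gamma(\frac{d-j+2p}{2})/\Gamma(\frac{2p+d+1}{2})$. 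The degenerate cases $s-2p-1\in\{0,-1\}$ (i.e.\ $m=s-2p\in\{1,0\}$) must be handled separately, which is precisely the convention ${2\pi\sigma_m}/{m}=\sigma_{m+2}$ flagged in the statement; there the factor $s-2p-1$ in the denominator cancels against a matching zero or against the $2\pi m$ prefactor in \eqref{trivialcase}, so the formula remains finite. I would verify these low cases by hand and present the general case as the ${}_3F_2$ identity, citing \cite{slater} or \cite{weis} for the contiguity step rather than rederiving it.
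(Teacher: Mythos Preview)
Your overall strategy---specialize Theorem~\ref{ksmall} to $k=j-1$, eliminate the generalized tensors $\Phi_{d-1}^{r,s-2p,1}$ via \eqref{trivialcase}, and then collapse the resulting coefficients using a ${}_3F_2$ contiguity relation---is one of the two routes the paper itself suggests (it cites the identity $d(d+1)e\big({}_3F_2(a,b,c;d,e;z)-{}_3F_2(a,b,c;d+1,e;z)\big)=abcz\,{}_3F_2(a+1,b+1,c+1;d+2,e+1;z)$ for this step). So the approach is correct in outline.

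However, your substitution from \eqref{trivialcase} is mis-indexed. Writing $s'$ for the parameter in \eqref{trivialcase}, you need $s'-2=s-2p$, i.e.\ $s'=s-2p+2$, which gives
\[
\Phi_{d-1}^{r,s-2p,1}(X)=Q\,\Phi_{d-1}^{r,s-2p}(X)-2\pi(s-2p+2)\,\Phi_{d-1}^{r,s-2p+2}(X).
\]
Thus $2pQ^{p-1}\Phi_{d-1}^{r,s-2p,1}$ contributes $2pQ^{p}\Phi_{d-1}^{r,s-2p}$ and $-4\pi p(s-2p+2)Q^{p-1}\Phi_{d-1}^{r,s-2(p-1)}$; your displayed expression has both the indices and the factor $(s-2p)$ wrong, which will propagate into the reindexing. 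Once corrected, the coefficient of $Q^p\Phi_{d-1}^{r,s-2p}$ is
\[
(d-1+2p)\,\chi^p_{d,j,j-1,s}-4\pi(p+1)(s-2p)\,\chi^{p+1}_{d,j,j-1,s},
\]
and it is this combination that the ${}_3F_2$ identity collapses to the claimed $\chi^p_{d-2,j-2,j-1,s}/(s-2p-1)$.

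The paper also offers a shorter alternative worth knowing: go back to Theorem~\ref{denne} directly. For $k=j-1$ the index set $I$ is empty, so $\sum_{l=1}^{d-1}\langle z,a_l\rangle^2=1-\langle z,n\rangle^2$ and the $a_l$-dependence vanishes before any tensor appears. One is left with computing $\int_{S^{d-1}_+(n)} z^s(1-\langle z,n\rangle^2)^{-(j-1)/2}\langle z,n\rangle^{j}\,\di z$, which the same expansion as in the proof of Theorem~\ref{ksmall} handles and which never produces the generalized tensors $\Phi_{d-1}^{r,\cdot,1}$ at all. This route avoids both \eqref{trivialcase} and the ${}_3F_2$ contiguity step.
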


\begin{proof}
This follows either directly from Theorem \ref{denne} using a computation of
\begin{equation*}
 \int_{S^{d-1}_+(n)} z^s (1-\langle z,n\rangle^2)^{-\frac{j-1}{2}}\langle z,n\rangle^{k+1} \,  \di z,
\end{equation*} 
 or from Theorem \ref{ksmall} using the identity \cite{wolfram32} 
\begin{equation*}
d(d+1)e \Big({}_3F_2(a,b,c;d,e;z) - {}_3F_2(a,b,c;d+1,e;z)\Big)=abcz{}_3F_2(a+1,b+1,c+1;d+2,e+1;z).
\end{equation*} 
\end{proof}

\subsection{Crofton integrals for spherical harmonics}
In the case $k=j-1$, the Crofton integral has a particularly nice expression in terms of spherical harmonics. This is analogous to \cite[Corollary 6.1]{bernighug}. 
\begin{cor}\label{harmonic}
Let $X\in \mathcal{PR}^d$ where $d\geq 3$ and let $1\leq j<d$ and $k=j-1$. Assume $\psi(E,x,n)=f(x)h(n)$ where $h $ is a $d$-dimensional spherical harmonic of degree $s$. Then,
\begin{align*}
\int_{\mathcal{E}_j^d} \Psi^\psi_{j-1,E}(X\cap E)\, \di E {}&= {c_{d-2,j-1}\sigma_{d-1}}a_{s,j,d} \Psi_{d-1,\R^d}^\psi(X),
\end{align*}
where
\begin{align*}
\sigma_{d-1} a_{2m,j,d}{}&=  (-1)^m \frac{\pi^{\frac{d-2}{2}}\Gamma\big(\frac{2m+1}{2}\big) \Gamma\big(\frac{d-j}{2}\big) \Gamma\big(\frac{j+1}{2}\big) }{\Gamma\big(\frac{2m+d-1}{2}\big)\Gamma\big(\frac{d+1}{2}\big) }  {}_3F_2\Big(-m,\tfrac{2m+d-2}{2},\tfrac{j+1}{2};\tfrac{1}{2},\tfrac{d+1}{2};1\Big),\\
\sigma_{d-1} a_{2m+1,j,d}{}&=  (-1)^m \frac{2\pi^{\frac{d-2}{2}} \Gamma\big(\frac{2m+3}{2}\big) \Gamma\big(\frac{d-j}{2}\big) \Gamma\big(\frac{j+2}{2}\big) }{ \Gamma\big(\frac{2m+d-1}{2}\big) \Gamma\big(\frac{d+2}{2}\big) }  {}_3F_2\Big(-m,\tfrac{2m+d}{2} ,\tfrac{j+2}{2};\tfrac{3}{2},\tfrac{d+2}{2};1\Big)  .
\end{align*}
\end{cor}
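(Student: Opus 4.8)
The plan is to feed the product structure $\psi(E,x,n)=f(x)h(n)$ into the $k=j-1$ case of Theorem~\ref{crofton} and reduce everything to a single integral over the Grassmannian, which will be handled by the Funk--Hecke theorem. Since $\psi$ does not depend on its first argument, the last identity in Theorem~\ref{crofton} gives
\[
\int_{\mathcal{E}_j^d}\Psi^\psi_{j-1,E}(X\cap E)\,\di E=\int_{\nor X}f(x)\,F(n)\,\Lambda_{d-1}(X;\di(x,n)),\qquad F(n):=\int_{\Li_j^d}h\bigl(\pi(n|L)\bigr)\,\bigl|p(n|L)\bigr|\,\di L .
\]
Thus it suffices to prove $F(n)=c_{d-2,j-1}\sigma_{d-1}a_{s,j,d}\,h(n)$ for the constant in the statement; substituting this back and using $\int_{\nor X}f(x)h(n)\,\Lambda_{d-1}(X;\di(x,n))=\Psi^\psi_{d-1,\R^d}(X)$ closes the argument. (One could instead start from Theorem~\ref{denne} with $k=j-1$ and simplify $\sum_{l=1}^{d-1}\langle z,a_l\rangle^2=1-\langle z,n\rangle^2$; this produces the same spherical integral, but routing through Theorem~\ref{crofton} is cleaner.)

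First I would convert $F(n)$ into an integral over $S^{d-1}$ by applying the coarea formula \eqref{coarea} with $v=n$ to the function $L\mapsto h(\pi(n|L))\,|p(n|L)|$. For $M\in\Li_{j-1}^{n^\perp\cap u^\perp}$ one checks that $p(n|M^u)=\langle n,u\rangle u$, so on the set $\{\langle u,n\rangle>0\}$ we have $\pi(n|M^u)=u$ and $|p(n|M^u)|=\langle n,u\rangle$; the remaining inner Grassmann integral over $\Li_{j-1}^{n^\perp\cap u^\perp}$ contributes the constant factor $c_{d-2,j-1}$. Collecting the exponents, this yields
\[
F(n)=c_{d-2,j-1}\int_{S^{d-1}}\mathds{1}_{\{\langle u,n\rangle>0\}}\bigl(1-\langle u,n\rangle^2\bigr)^{\frac{1-j}{2}}\langle u,n\rangle^{j}\,h(u)\,\di u ,
\]
where the weight is integrable near $\langle u,n\rangle=1$ exactly because $j<d$.

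Next I would invoke the Funk--Hecke theorem (as in \cite{groemer}): since $h$ is a spherical harmonic of degree $s$ and the kernel depends only on $\langle u,n\rangle$, the integral equals $c_{d-2,j-1}\sigma_{d-1}a_{s,j,d}\,h(n)$ with
\[
\sigma_{d-1}a_{s,j,d}=\sigma_{d-1}\int_0^1 t^{j}\bigl(1-t^2\bigr)^{\frac{d-j-2}{2}}P_s^d(t)\,\di t ,
\]
$P_s^d$ being the Legendre polynomial in dimension $d$ normalised by $P_s^d(1)=1$, i.e.\ $P_s^d=C_s^{(\frac{d-2}{2})}/C_s^{(\frac{d-2}{2})}(1)$; the fact that the weight vanishes for $t<0$ causes no difficulty. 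The main computational obstacle is to evaluate this one-dimensional integral in closed form. Here I would split into $s=2m$ and $s=2m+1$, use the representations $C_{2m}^{(\nu)}(t)\propto{}_2F_1(-m,m+\nu;\tfrac12;t^2)$ and $C_{2m+1}^{(\nu)}(t)\propto t\,{}_2F_1(-m,m+\nu+1;\tfrac32;t^2)$ with $\nu=\tfrac{d-2}{2}$, substitute $u=t^2$, and apply the Euler-type identity
\[
\int_0^1 u^{\beta-1}(1-u)^{\gamma-\beta-1}\,{}_2F_1(a,b;c;u)\,\di u=B(\beta,\gamma-\beta)\,{}_3F_2(a,b,\beta;c,\gamma;1)
\]
with $\beta=\tfrac{j+1}{2}$ (resp.\ $\tfrac{j+2}{2}$) and $\gamma=\tfrac{d+1}{2}$ (resp.\ $\tfrac{d+2}{2}$); the terminating ${}_3F_2$ at $1$ appears directly, with parameters matching those in the statement. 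Finally I would collapse the accumulated Gamma factors coming from $\sigma_{d-1}$, $B(\beta,\gamma-\beta)$, $(\nu)_m/m!$ and $C_s^{(\nu)}(1)$ to the asserted form, the essential tool being the duplication formula $\Gamma(z)\Gamma(z+\tfrac12)=2^{1-2z}\sqrt{\pi}\,\Gamma(2z)$. The hypothesis $d\ge 3$ enters only to keep the Funk--Hecke framework and the Gegenbauer parameter $\nu\ge\tfrac12$ in their standard form.
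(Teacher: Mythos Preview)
Your argument is correct and follows essentially the same route as the paper. Both reduce the affine integral to the half-spherical integral $\int_{S^{d-1}_+(n)}\langle z,n\rangle^{j}(1-\langle z,n\rangle^2)^{\frac{1-j}{2}}h(z)\,\di z$ and then apply Funk--Hecke to isolate the one-dimensional integral $a_{s,j,d}=\int_0^1 t^{j}(1-t^2)^{\frac{d-j-2}{2}}P_s^d(t)\,\di t$; the only cosmetic differences are that the paper reaches the spherical integral via Theorem~\ref{denne} (together with $\sum_l\langle z,a_l\rangle^2=1-\langle z,n\rangle^2$) rather than via the $k=j-1$ case of Theorem~\ref{crofton} plus the coarea formula~\eqref{coarea}, and that the paper looks up $\int_0^1 (1-y)^{\frac{d-j-2}{2}}y^{\frac{j-1}{2}}C_s^{\frac{d-2}{2}}(y^{1/2})\,\di y$ in \cite[7.319]{grad} whereas you derive the same ${}_3F_2$ value by hand from the ${}_2F_1$ representation of $C_s^{(\nu)}$ and the Euler integral.
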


\begin{proof}
Applying  \eqref{sphereform} and \cite[Theorem 3.4.1]{groemer}, we get
\begin{align*}
\int_{S^{d-1}_+(n)} \langle z,n \rangle^{j}  (1-\langle z,n \rangle^2)^{\frac{-j+1}{2}} h(z)\, \di z=\sigma_{d-1} h(n) \int_0^1   (1-t^2)^{\frac{d-j-2}{2}} t^{j} P_s^d(t) \, \di t,
\end{align*}
where $P_s^d(t)$ is the Legendre polynomial of dimension $d$ and degree $s$ (with the notation of \cite{groemer}). The constants
\begin{align*}
a_{s,j,d} = \int_0^1   (1-t^2)^{\frac{d-j-2}{2}} t^{j} P_s^d(t)\, \di t
\end{align*}
are computed explicitly in \cite[Proposition 3]{mk} (with the same notation). 
A more compact expression can be found by consulting the integral table \cite{grad}.
The polynomial called $\binom{s+d-3}{d-3}P_s^d$ in \cite{groemer} is here referred to as the Gegenbauer polynomial $C_s^{(d-2)/2}$. We perform a substitution 
\begin{align*}
 \int_0^1   (1-t^2)^{\frac{d-j-2}{2}} t^{j} P_s^d(t)\, \di t=
 \frac{1}{2} \binom{s+d-3}{d-3}^{-1}\int_0^1   (1-y)^{\frac{d-j-2}{2}} y^{\frac{j-1}{2}} C_s^{\frac{d-2}{2}}\big(y^{\frac{1}{2}}\big)\, \di y.
\end{align*}
The latter integral can be found in \cite[7.319]{grad}. Theorem \ref{denne} then yields the result.
\end{proof}

Corollary \ref{harmonic} has a nice application to Minkowski tensors.
It was shown in \cite[Proposition 4.16]{bernighug} that the following tensor of rank $s$
\begin{equation*}
H^s_d (u) = \sum_{i=0}^{\lfloor \frac{s}{2}\rfloor} \frac{(-1)^i \Gamma\big(\frac{d}{2}+s-1-i\big)}{4^i i!(s-2i)!}|u|^{2i}  Q^i u^{s-2i}
\end{equation*}
has the property that all its coordinates are $d$-dimensional spherical harmonics. Replacing $u^s$ by $H_d^s(u)$ in the definition of the Minkowski tensors, we get the harmonic Minkowski tensors 
\begin{align*}
\Xi_{k}^{r,s}(X) = \frac{\sigma_{d-k}}{r!s!\sigma_{d-k+s}} \int_{\Sigma} x^r H_d^s(n)\, \Lambda_k(X; \, \di (x,n)).
\end{align*}

Similarly, for $E\in \mathcal{E}_j^d$ and $X \subseteq E$ we may define  
\begin{equation*}
\tilde{\Xi}_{j-1,E}^{r,s}(X)=\frac{\sigma_1}{r!s!\sigma_{s+1}}\int_{\Sigma^E} x^r H_d^s(n) \,\Lambda_{j-1}^E(X;\di (x,n)).
\end{equation*}
The tilde in $\tilde{\Xi}_{j-1,E}^{r,s}(X)$ indicates that this is generally not a harmonic tensor when $j=\dim E < d$ since $H_d^s(u)$ restricted to $S^{j-1}( E)$ is not necessarily harmonic. 
\begin{cor}\label{corharmint}
Let $X\in \mathcal{PR}^d$ where $d\geq 3$ and let $1\leq j<d$ and $k=j-1$. Then
\begin{align}\label{harmint}
\int_{\mathcal{E}_j^d} \tilde{\Xi}_{k,E}^{r,s}(X\cap E) \di E {}&= a_{s,j,d}c_{d-2,j-1}\sigma_{d-1}\Xi_{d-1}^{r,s} (X).
\end{align}
\end{cor}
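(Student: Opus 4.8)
The plan is to deduce Corollary \ref{corharmint} directly from Corollary \ref{harmonic} by working coordinatewise. The tensor-valued functional $\tilde{\Xi}_{k,E}^{r,s}$ is built from the fixed polynomial tensor $H_d^s$, whose every coordinate is, by \cite[Proposition 4.16]{bernighug}, a $d$-dimensional spherical harmonic of degree $s$. So I would fix an orthonormal basis of $\R^d$ and of the symmetric tensor space $\mathbb{T}^{r+s}$, and for each coordinate write the corresponding scalar component of $\tilde{\Xi}_{k,E}^{r,s}(X\cap E)$ in the form $\Psi^\psi_{j-1,E}(X\cap E)$ with $\psi(E,x,n) = f(x) h(n)$, where $f(x)$ is the appropriate monomial of degree $r$ in the coordinates of $x$ and $h(n)$ is the corresponding coordinate of $H_d^s(n)$ — which is a spherical harmonic of degree $s$. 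This puts us exactly in the setting of Corollary \ref{harmonic}.

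The key steps, in order, are: first, observe that $\tilde{\Xi}_{k,E}^{r,s}(X)=\Psi^{\psi}_{j-1,E}(X)$ with $\psi(E,x,n)=\frac{\sigma_1}{r!s!\sigma_{s+1}}x^r H_d^s(n)$, and that $\Psi^{(\cdot)}_{j-1,E}$ depends linearly on $\psi$, so it suffices to treat each coordinate functional separately. Second, apply Corollary \ref{harmonic} to each scalar functional $\Psi^{f h}_{j-1,E}$ with $h$ the relevant coordinate of $H_d^s$: since $h$ is a spherical harmonic of degree $s$, Corollary \ref{harmonic} gives $\int_{\mathcal{E}_j^d}\Psi^{fh}_{j-1,E}(X\cap E)\,\di E = c_{d-2,j-1}\sigma_{d-1}a_{s,j,d}\,\Psi^{fh}_{d-1,\R^d}(X)$, where crucially the constant $a_{s,j,d}$ depends only on $s,j,d$ and not on which coordinate $h$ we picked. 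Third, reassemble the coordinates: since the constant is the same for every coordinate, summing the coordinate identities against the basis tensors of $\mathbb{T}^{r+s}$ recovers the tensor identity, and the right-hand side assembles to $a_{s,j,d}c_{d-2,j-1}\sigma_{d-1}$ times $\frac{\sigma_{d-(d-1)}}{r!s!\sigma_{d-(d-1)+s}}\int_\Sigma x^r H_d^s(n)\,\Lambda_{d-1}(X;\di(x,n))=\Xi_{d-1}^{r,s}(X)$ (noting $\sigma_1=\sigma_{d-k}$ with $k=d-1$ and $s+1=\sigma_{d-k+s}$-index agree), which is precisely \eqref{harmint}.

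The only genuine subtlety — and the step I would be most careful about — is the interchange of the finite sum over tensor coordinates with the integral over $\mathcal{E}_j^d$, i.e.\ justifying that applying Corollary \ref{harmonic} coordinatewise and then recombining is legitimate. This is routine because the sum is finite and the integrand is integrable (Theorem \ref{crofton} already guarantees existence of the integral for locally bounded measurable $\psi$, and each coordinate of $x^r H_d^s(n)$ is such), so linearity of the integral does the job; there is no analytic obstacle, only bookkeeping. A secondary point worth a sentence is the hypothesis $\psi(E,x,n)=f(x)h(n)$ required by Corollary \ref{harmonic}: here $f(x)=$ monomial in the coordinates of $x$ is measurable and locally bounded, and $h(n)=$ coordinate of $H_d^s(n)$ is a polynomial in $n$, hence continuous on $S^{d-1}$, so all hypotheses of Corollary \ref{harmonic} are met. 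Thus the corollary follows with no new computation beyond unwinding the definitions of $\tilde{\Xi}_{k,E}^{r,s}$ and $\Xi_{d-1}^{r,s}$.
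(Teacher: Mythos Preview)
Your proposal is correct and matches the paper's treatment: Corollary~\ref{corharmint} is stated there without proof, as an immediate consequence of Corollary~\ref{harmonic} applied coordinatewise, which is exactly your argument. The only minor bookkeeping remark is that, because of the symmetrization in the tensor product $x^r H_d^s(n)$, a single coordinate of $\tilde{\Xi}_{k,E}^{r,s}$ is in general a finite \emph{sum} of products $f(x)h(n)$ rather than a single such product---but your linearity observation already covers this, since each summand has $h$ a degree-$s$ spherical harmonic and the constant $a_{s,j,d}$ is the same for all of them.
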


\begin{rem}
The result in Corollary \ref{corharmint} resembles \cite[Corollary 6.1]{bernighug}. In  \cite{bernighug}, the tensors on the left hand side of \eqref{harmint} are computed with respect to the measures $\Lambda_k(X\cap E,\cdot)$, i.e.\ with $X\cap E$ considered as a subset of $\R^d$ rather than $E$. In this setting, a formula like \eqref{harmint} holds for all $k<j-1$ when $r=0$. We are able to show the formula for all $r\geq 0$, but only for $k=j-1$.  We leave it open whether a similar formula holds for $k<j-1$.

In applications, it is simpler and more natural to consider $X\cap E$ as a subset of $E$ when computing a tensor. On the other hand, the tensor $\tilde{\Xi}_{j-1,E}^{r,s}(X\cap E)$ is not a harmonic tensor on $E$ and therefore the formula \eqref{harmint} is less natural than the analogue in \cite{bernighug}.
\end{rem}

\begin{rem}
Corollary \ref{corharmint} can be used to define an unbiased estimator for the harmonic tensor $\Xi_{d-1}^{r,s} (X)$ if $a_{s,j,d}$ is non-zero. It can be shown that $a_{s,1,d}\neq 0$ if and only if $s$ is even or $s=1$. The question whether $a_{s,j,d}\neq 0$ for $j>1$ is more involved. It was answered in \cite{mk} in certain cases.  

Suppose we want to express the Minkowski tensor $\Phi_{d-1}^{r,s} (X)$ as a Crofton integral. In the case $r=0$ and $j=1$, this was done in \cite{astridmarkus}. In the general situation, we write $\Phi_{d-1}^{r,s} (X)$ as linear combination of the harmonic Minkowski tensors $Q^p \Xi_{d-1}^{r,s-2p} (X)$. This is possible because $u^s$ can be written as a linear combination of the tensors $Q^p H^{s-2p}_d(u)$ (see \cite[Proposition 4.10]{bernighug} when $r=0$). 
If the constants $a_{s-2p,j,d}$ are non-zero for all $p\geq 0$, then we can use \eqref{harmint} to express  $\Phi_{d-1}^{r,s}(X)$ as a Crofton integral. 
\end{rem}

\appendix

\section*{Appendix}
In this Appendix, we present proofs of  Proposition \ref{linkombprop} and Theorem \ref{rotational}. Since the proofs use the theory of normal cycles for sets of positive reach, we first recall the definition and basic properties. Details can be found in \cite{zahle}.

Let $X\in \mathcal{PR}^d$ be a compact set of positive reach. Then $\nor X $ is a compact $(d-1)$-rectifiable set. The oriented tangent space at $(x,n)\in \nor X$ can be identified with the simple $(d-1)$-vector 
\begin{equation*}
a_X(x,n) = \bigwedge_{1\leq i\leq d-1} \Big( \tfrac{1}{\sqrt{1+\kappa_i(x,n)^2}} a_i(x,n), \tfrac{\kappa_i(x,n)}{\sqrt{1+\kappa_i(x,n)^2}} a_i(x,n)\Big) \in \wedge_{d-1}(\R^d \times TS^{d-1}),
\end{equation*}
where $a_i$ and $\kappa_i$ are as in \eqref{awedge}. If $X\in \mathcal{PR}(L)$ is considered as a subset of $L\in \mathcal{L}_j^d$, then we denote  its normal bundle inside $\Sigma^L $ by $\nor^L X$ and the tangent $(j-1)$-vector field of $\nor^L X$ by $a_X^L\in \wedge_{j-1} (T \Sigma^L) $.

The normal cycle of $X\in \mathcal{PR}^d $ is a $(d-1)$-current $N_X$ that acts on a differential $(d-1)$-form $\omega$ in $\R^d \times S^{d-1}$ by
\begin{equation*}
N_X(\omega) = \int_{\nor X} \langle a_X, \omega \rangle\, \di \Ha^{d-1}.
\end{equation*}
The normal cycle is a cycle, i.e.\ it vanishes if $\omega$ is a coboundary.

The Lipschitz-Killing curvature form on $\R^d\times S^{d-1}$  is the $(d-1)$-form given in the coordinates $x_1,\dots,x_d,n_1,\dots,n_d$ on $\R^d \times \R^d$ by
\begin{align*}
\rho_k = \frac{1}{k!(d-k-1)!\sigma_{d-k}} \sum_{\sigma \in S_{d} } \sgn(\sigma) n_{\sigma(d) }\, \di x_{\sigma(1)}\wedge \dotsm \wedge \di x_{\sigma(k)} \wedge \di n_{\sigma(k+1)}\wedge \dotsm \wedge \di n_{\sigma(d-1)},
\end{align*}
where $S_d$ is the group of permutations of $\{1,\dots,d\}$ and $\sgn (\sigma)$ is the sign of $\sigma \in S_d$.
For $L\in \Li_j^d$, a relative Lipschitz-Killing $(j-1)$-curvature form $\rho_k^L$ on $L\times S^{j-1}(L)$ is defined in a similar way.
Suppose $X\in \mathcal{PR}(L)$. Integration of a  locally bounded measurable function $\psi: \Sigma^L \to \R$ with respect to $\Lambda_k^L(X;\cdot)$ is then given by
\begin{equation*}
\Psi^\psi_{k,L} (X)  =\int_{\Sigma^L} \psi(x,n)\, \Lambda^L_k (X;\di (x,n) )= \int_{\nor^L X } \psi(x,n) \langle a_{X}^L , \rho_k^L \rangle\, \Ha^{d-1}(\di (x,n)).  
\end{equation*}

\begin{proof}[Proof of Proposition \ref{linkombprop}]
The identity \eqref{trivialcase} follows easily from 
\begin{equation*}
Q= n^2 + \sum_{i=1}^{d-1} a_i^2.
\end{equation*}
Both \eqref{linkomb} and \eqref{linkomb2} follow inductively  from the identity
\begin{align}\label{induction}
\mathds{1}_{\{r\geq 0\}}{}&\Phi_{k}^{r,s-2,1}(X) \\ \nonumber
&= \mathds{1}_{\{k>1,s>2\}}\Phi_{k-1}^{r+1,s-3,1}(X) + 2\pi (s-1)\Phi_{k-1}^{r+1,s-1}(X) - \mathds{1}_{\{s>2\}} Q \Phi_{k-1}^{r+1,s-3}(X) 
\end{align}
for $s\geq 2$, $r\geq -1$, and $1 \leq k\leq d-1$.
In the case of \eqref{linkomb2}, the induction start is \eqref{trivialcase}.
Equation \eqref{r0} is the special case $r=-1$.

The Minkowski tensor $\Phi_{k}^{r,s}(X)$ is given by applying the normal cycle $N_X$ to a $(d-1)$-form $\rho_{k}^{r,s} : \wedge_{d-1} (\R^d \times \R^d) \to \mathbb{T}^{r+s}$. Choose  an oriented orthonormal basis $e_1,\dots,e_d$ for $\R^d$. Then the coordinate of $\rho_{k}^{r,s}$ corresponding to the basis element $e_{i_1}\odot \dotsm \odot e_{i_{r+s}} \in \mathbb{T}^{r+s}$ is 
\begin{align*}
\rho_{k,i_1,\dots,i_{r+s}}^{r,s} ={}& \frac{1}{k!(d-k-1)!\omega_{d-k+s} r!s!}x_{i_1} \dots x_{i_r}n_{i_{r+1}}\dots n_{i_{r+s}}\\
&\times \sum_{\sigma \in S_{d} } \sgn(\sigma) n_{\sigma(d) }\, \di x_{\sigma(1)}\wedge \dotsm \wedge \di x_{\sigma(k)} \wedge \di n_{\sigma(k+1)}\wedge \dotsm \wedge \di n_{\sigma(d-1)}. 
\end{align*}
Here and throughout the proof, symmetrization in the indices $i_1,\dots,i_{r+s}$ is understood when we write the coordinates of a $\mathbb{T}^{r+s}$-valued tensor.
Similarly, it can be shown \cite[Section 4]{hugschneider} that  $\Phi_{k}^{r,s-2,1}(X)$ is given by applying the normal cycle to the $(d-1)$-form with coordinates 
\begin{align*}
&\eta_{k,i_1,\dots,i_{r+s}}^{r,s} = \frac{1}{(k-1)!(d-k-1)!\omega_{d-k+s-2} r!(s-2)!}x_{i_1}\dotsm x_{i_r} n_{i_{r+1}}\dots n_{i_{r+s-2}}\\
&\times \, \di x_{i_{r+s-1}}\wedge\sum_{\sigma \in S_{d}, \sigma(1)=i_{r+s} }\sgn(\sigma) n_{\sigma(d) }\, \di x_{\sigma(2)}\wedge \dotsm \wedge \di x_{\sigma(k)} \wedge \di n_{\sigma(k+1)}\wedge \dotsm \wedge \di n_{\sigma(d-1)}. 
\end{align*}

We first show \eqref{induction} in the case $r\geq 0$. Define the $\mathbb{T}^{r+s}$-valued $(d-2)$-form
\begin{align*}
\omega_{k,i_1,\dots,i_{r+s}}^{r,s} = {}& \frac{1}{(k-1)!(d-k-1)!\omega_{d-k+s-2} (r+1)!(s-2)!}x_{i_1}\dotsm x_{i_r}x_{i_{r+s-1}} n_{i_{r+1}}\dots n_{i_{r+s-2}}\\
&\times \sum_{\sigma \in S_{d}, \sigma(1)=i_{r+s} } \sgn(\sigma) n_{\sigma(d) }\, \di x_{\sigma(2)}\wedge \dotsm \wedge \di x_{\sigma(k)} \wedge \di n_{\sigma(k+1)}\wedge \dotsm \wedge \di n_{\sigma(d-1)}. 
\end{align*}
Since $N_X$ vanishes on coboundaries, it suffices to show that pointwise
\begin{equation}\label{formid}
\big\langle a_X,\di \omega_k^{r,s} \big\rangle = \big\langle a_X,\eta_{k}^{r,s} - \mathds{1}_{\{k>1,s>2\}}\eta_{k-1}^{r+1,s-1} - 2\pi (s-1)\rho_{k-1}^{r+1,s-1} + \mathds{1}_{\{s>2\}}Q \rho_{k-1}^{r+1,s-3}\big\rangle. 
\end{equation}
It is straight-forward to check that all differential forms in \eqref{formid} are $\SO(d)$-covariant. (We say that $\omega$ is $\SO(d)$-covariant if for all $\theta \in \SO(d)$,
\begin{equation*}
\big\langle \wedge_{i=1}^{d-1} (\theta_*v_i,\theta_*w_i) ,  \omega_{(\theta(x),\theta(n))} (\theta u_1,\dots , \theta u_{r+s}) \big\rangle = \big\langle \wedge_{i=1}^{d-1} (v_i,w_i) ,  \omega_{(x,n)} (u_1,\dots ,  u_{r+s}) \big\rangle,
\end{equation*}
for all $(v_i,w_i)\in \R^d \times \R^d$ and $u_1,\dots u_{r+s}\in \R^d $.)
It is therefore enough to show \eqref{formid} when $(x,n)=(x,e_d)$. At this point, forms of the type $\di n_d \wedge \xi$ and $\di x_d \wedge \xi$ vanish on $a_X$.
 We first compute:
\begin{align}\nonumber
\di {}&\omega_{k,i_1,\dots,i_{r+s}}^{r,s}= \eta_{k,i_1,\dots,i_{r+s}}^{r,s}\\ \nonumber
&+ \mathds{1}_{\{s>2\}} \sum_{j=r+1}^{r+s-2} \frac{x_{i_1} \dotsm x_{i_r}x_{i_{r+s-1}} n_{i_{r+1}} \dotsm \hat{n}_{i_j} \dotsm n_{i_{r+s-2}}}{(k-1)!(d-k-1)!\omega_{d-k+s-2} (r+1)!(s-2)!}\sum_{\sigma \in S_{d} } \sgn(\sigma)\\ \label{a}
&\quad \times  \delta_{ \sigma(1),i_{r+s}} n_{\sigma(d) }\, \di {n_{i_j}} \wedge  \di x_{\sigma(2)}\wedge \dotsm \wedge \di x_{\sigma(k)} \wedge \di n_{\sigma(k+1)}\wedge \dotsm \wedge \di n_{\sigma(d-1)}\\ \nonumber
&+ \frac{x_{i_1} \dotsm x_{i_r}x_{i_{r+s-1}} n_{i_{r+1}} \dotsm n_{i_{r+s-2}}}{(k-1)!(d-k-1)!\omega_{d-k+s-2} (r+1)!(s-2)!}\sum_{\sigma \in S_{d} } \sgn(\sigma)  \\ \label{b}
&\quad \times \delta_{ \sigma(1),i_{r+s}}\, \di n_{\sigma(d) }\wedge \di x_{\sigma(2)}\wedge \dotsm \wedge \di x_{\sigma(k)} \wedge \di n_{\sigma(k+1)}\wedge \dotsm \wedge \di n_{\sigma(d-1)}.
\end{align}
Here the Kronecker $\delta$-notation has been used and a "$\hat{\ }$" indicates that a factor is left out. We see that \eqref{a} equals
\begin{align} \nonumber
& \mathds{1}_{\{s>2\}} \sum_{j=r+1}^{r+s-2} \frac{x_{i_1} \dotsm x_{i_r}x_{i_{r+s-1}} n_{i_{r+1}} \dotsm \hat{n}_{i_j} \dotsm n_{i_{r+s-2}} }{(k-1)!(d-k-1)!\omega_{d-k+s-2} (r+1)!(s-2)!}\delta_{i_j,i_{r+s}}\sum_{\sigma \in S_{d}}  \sgn(\sigma)\delta_{\sigma(1),i_{r+s} } \\ \label{c}
&\quad \times n_{\sigma(d) } \, \di n_{\sigma(1)} \wedge \di x_{\sigma(2)}\wedge \dotsm \wedge \di x_{\sigma(k)} \wedge \di n_{\sigma(k+1)}\wedge \dotsm \wedge \di n_{\sigma(d-1)}\\ \nonumber
&- \mathds{1}_{\{s>2\}} \sum_{j=r+1}^{r+s-2} \frac{x_{i_1} \dotsm x_{i_r}x_{i_{r+s-1}} n_{i_{r+1}} \dotsm \hat{n}_{i_j} \dotsm n_{i_{r+s-2}}}{(k-1)!(d-k-1)!\omega_{d-k+s-2} (r+1)!(s-2)!} \sum_{\sigma \in S_{d}} \sgn(\sigma)\delta_{ \sigma(1),i_{r+s} } \sum_{m=2}^k  \\ \label{d}
&\quad\times \delta_{i_j,\sigma(m)} n_{\sigma(d) }\, \di {x_{i_j}}\wedge  \di x_{\sigma(2)}\wedge  \dotsm \di n_{\sigma(m)} \dotsm \wedge \di x_{\sigma(k)} \wedge \di n_{\sigma(k+1)}\wedge \dotsm \wedge \di n_{\sigma(d-1)}\\ \nonumber
&+ \mathds{1}_{\{s>2\}} \sum_{j=r+1}^{r+s-2} \frac{x_{i_1} \dotsm x_{i_r}x_{i_{r+s-1}} n_{i_{r+1}} \dotsm \hat{n}_{i_j} \dotsm n_{i_{r+s-2}}}{(k-1)!(d-k-1)!\omega_{d-k+s-2} (r+1)!(s-2)!} \sum_{\sigma \in S_{d} } \sgn(\sigma) \delta_{\sigma(1),i_{r+s}}\\ \label{e}
&\quad \times \delta_{i_j,\sigma(d)} n_{\sigma(d) } \, \di {n_{\sigma(d)}} \wedge \di x_{\sigma(2)}\wedge \dotsm \wedge \di x_{\sigma(k)} \wedge \di n_{\sigma(k+1)}\wedge \dotsm \wedge \di n_{\sigma(d-1)}.
\end{align}
Clearly \eqref{e} vanishes at $(x,e_d)$ when evaluated on $a_X$. Moreover, \eqref{c} equals
\begin{align}\nonumber
& Q \rho_{k-1}^{r+1,s-3} \mathds{1}_{\{s>2\}} \\ \nonumber
&- \mathds{1}_{\{k>1,s>2\}} \sum_{j=r+1}^{r+s-2} \frac{x_{i_1} \dotsm x_{i_r}x_{i_{r+s-1}} n_{i_{r+1}} \dotsm \hat{n}_{i_j} \dotsm n_{i_{r+s-2}}}{(k-2)!(d-k)!\omega_{d-k+s-2} (r+1)!(s-2)!} \delta_{i_j,i_{r+s}} \sum_{\sigma \in S_{d}}  \sgn(\sigma) \\ \label{f}
&\quad \times \delta_{\sigma(1),i_{r+s} } n_{\sigma(d) } \, \di x_{\sigma(1)} \wedge  \di x_{\sigma(2)}\wedge \dotsm \wedge \di x_{\sigma(k-1)} \wedge \di n_{\sigma(k)}\wedge \dotsm \wedge \di n_{\sigma(d-1)}\\ \nonumber
&- \mathds{1}_{\{s>2\}} \sum_{j=r+1}^{r+s-2} \frac{x_{i_1} \dotsm x_{i_r}x_{i_{r+s-1}} n_{i_{r+1}} \dotsm \hat{n}_{i_j} \dotsm n_{i_{r+s-2}} }{(k-1)!(d-k-1)!\omega_{d-k+s-2} (r+1)!(s-2)!}\delta_{i_j,i_{r+s}}\sum_{\sigma \in S_{d} } \sgn(\sigma)  \\ \label{g}
&\quad \times \delta_{\sigma(d),i_{r+s}} n_{\sigma(d) }\,  \di x_{\sigma(1)}\wedge \di x_{\sigma(2)}\wedge \dotsm \wedge \di x_{\sigma(k-1)} \wedge \di n_{\sigma(k)}\wedge \dotsm \wedge \di n_{\sigma(d-1)},
\end{align}
and \eqref{d} + \eqref{f} equals
\begin{align}\nonumber
 &- \mathds{1}_{\{s>2,k>1\}} \sum_{j=r+1}^{r+s-2} \frac{(k-1)x_{i_1} \dotsm x_{i_r}x_{i_{r+s-1}} n_{i_{r+1}} \dotsm \hat{n}_{i_j} \dotsm n_{i_{r+s-2}}}{(k-1)!(d-k-1)!\omega_{d-k+s-2} (r+1)!(s-2)!}\sum_{\sigma \in S_{d} }\sgn(\sigma)  \\ \nonumber
&\quad\times \delta_{\sigma(1),i_{r+s}} \delta_{i_j,\sigma(k)} n_{\sigma(d) }\, \di {x_{i_j}}\wedge \di x_{\sigma(2)}\wedge \dotsm \di x_{\sigma(k-1)} \wedge \di n_{\sigma(k)} \wedge \dotsm \wedge \di n_{\sigma(d-1)}\\ \nonumber
&- \mathds{1}_{\{s>2,k>1\}}\sum_{j=r+1}^{r+s-2} \frac{(k-1)x_{i_1} \dotsm x_{i_r}x_{i_{r+s-1}} n_{i_{r+1}} \dotsm \hat{n}_{i_j} \dotsm n_{i_{r+s-2}}}{(k-1)!(d-k)!\omega_{d-k+s-2} (r+1)!(s-2)!} \sum_{\sigma \in S_{d} } \sgn(\sigma) \\ \nonumber
&\quad\times \delta_{\sigma(1),i_{r+s}} \delta_{i_j,\sigma(1)} n_{\sigma(d) } \, \di x_{i_j}\wedge  \di x_{\sigma(2)}\wedge \dotsm \wedge \di x_{\sigma(k-1)} \wedge \di n_{\sigma(k)}\wedge \dotsm \wedge \di n_{\sigma(d-1)}\\ \nonumber
&=-\mathds{1}_{\{s>2,k>1\}} \eta_{k-1,i_1,\dots,i_{r+s}}^{r+1,s-1}\\ \nonumber
&+ \mathds{1}_{\{s>2,k>1\}}\sum_{j=r+1}^{r+s-2} \frac{x_{i_1} \dotsm x_{i_r}x_{i_{r+s-1}} n_{i_{r+1}} \dotsm \hat{n}_{i_j} \dotsm n_{i_{r+s-2}} }{(k-2)!(d-k-1)!\omega_{d-k+s-2} (r+1)!(s-2)!}\sum_{\sigma \in S_{d}}\sgn(\sigma) \\
&\quad\times \delta_{\sigma(1),i_{r+s} } \delta_{\sigma(d),i_j}  n_{\sigma(d) } \, \di x_{i_j}\wedge  \di x_{\sigma(2)}\wedge \dotsm \wedge \di x_{\sigma(k-1)} \wedge \di n_{\sigma(k)}\wedge \dotsm \wedge \di n_{\sigma(d-1)}. \label{h}
\end{align}
Also \eqref{h} vanishes at $(x,e_d)$. It remains to evaluate \eqref{b}+\eqref{g} at $(x,e_d)$. This yields %
\begin{align*}
& \frac{x_{i_1} \dotsm x_{i_r}x_{i_{r+s-1}} \delta_{{i_{r+1}}, \dots ,i_{r+s-2},{i_{r+s}},d}}{(k-1)!(d-k-1)!\omega_{d-k+s-2} (r+1)!(s-2)!}\\
&\quad\times \sum_{\sigma \in S_{d} } (-\sgn(\sigma)) \delta_{\sigma(d),i_{r+s}}\,  \di x_{\sigma(1)}\wedge \dotsm \wedge \di x_{\sigma(k-1)} \wedge \di n_{\sigma(k)}\wedge \dotsm \wedge \di n_{\sigma(d-1)}\\
&- \mathds{1}_{\{s>2\}} (s-2) \frac{x_{i_1} \dotsm x_{i_r}x_{i_{r+s-1}} \delta_{{i_{r+1}}, \dots ,i_{r+s-2},{i_{r+s}} ,d}}{(k-1)!(d-k)!\omega_{d-k+s-2} (r+1)!(s-2)!}\\
&\quad\times \sum_{\sigma \in S_{d} } \sgn(\sigma) \delta_{\sigma(d),i_{r+s} } \, \di x_{\sigma(1)} \wedge \dotsm \wedge \di x_{\sigma(k-1)} \wedge \di n_{\sigma(k)}\wedge \dotsm \wedge \di n_{\sigma(d-1)}\\
&= -2\pi (s-1)\rho_{k-1,i_1,\dots,i_{r+s}}^{r+1,s-1}.
\end{align*}
Putting things together yields \eqref{formid}.

To check the remaining case $r=-1$, $1\leq k<d-1$, in \eqref{induction}, it is enough to show
\begin{equation*}
\big\langle a_X, \di \tilde{\omega}_{k}^{0,s}  \big\rangle = \big\langle a_X, -\mathds{1}_{\{k>1,s>2\}}\eta_{k}^{0,s-2} - 2\pi s\rho_{k}^{0,s} + \mathds{1}_{\{s>2\}}Q \rho_{k}^{0,s-2}\big\rangle =0,
\end{equation*}
where
\begin{align*}
\tilde{\omega}_{k,i_1,\dots,i_{s}}^{0,s} = {}& \frac{1}{k!(d-k-2)!\omega_{d-k+s-2} (s-1)!}n_{i_{1}}\dots n_{i_{s-1}}\\
&\times \sum_{\sigma \in S_{d}, \sigma(1)=i_{s} } \sgn(\sigma) n_{\sigma(d) }\, \di x_{\sigma(2)}\wedge \dotsm \wedge \di x_{\sigma(k+1)} \wedge \di n_{\sigma(k+2)}\wedge \dotsm \wedge \di n_{\sigma(d-1)}. 
\end{align*}
The computations are similar, but slightly simpler.
\end{proof}

\begin{proof}[Proof of Theorem \ref{rotational}]
We first show that $ \Psi^\psi_{k,L}(X\cap L)$ is integrable with respect to $L$. We have
\begin{align*}
&\int_{\Li_j^d}|\Psi^\psi_{k,L}(X\cap L) |\,\di L\\
{}&=\frac{1}{\sigma_j}\int_{\Li_{j-1}^d} \int_{S^{d-j}(L^\perp)} \bigg|\int_{\nor^{L^z} (X \cap L^z)} \psi(x,n) \langle a_{X\cap L^z}^{L^z}, \rho_k^{L^{z}} \rangle \,\Ha^{j-1}(\di (x,n))\bigg|\, \di z \, \di L \\
&\leq  \frac{\sup_{(x,n)\in \nor X}\psi(x,n)}{\sigma_j}\int_{\Li_{j-1}^d} \int_{S^{d-j}(L^\perp)}\int_{\nor^{L^z} (X \cap L^z)}| \langle a_{X\cap L^z}^{L^z}, \rho_k^{L^z} \rangle |\,\Ha^{j-1}(\di (x,n))\,\di z \,\di L .
\end{align*}
 Fix $L\in \Li_{j-1}^d$ such that for $\Ha^{j-1}$-almost all $(x,n)\in \nor X$ we have $x\notin L$ and for almost all $z\in S^{d-j}(L^\perp) $: 
\begin{equation}\label{condition}
\text{There is no } (x,n)\in \nor X \text{ with } x\in L^z\text{ and } n\perp L^z.
\end{equation}
 Almost all $L\in \Li_{j-1}^d$ satisfy these two conditions by \cite[Lemma 5]{evarataj} and the definition of $\widetilde{\mathcal{PR}}^d$.

Define two functions $f$ and $g$ as in \cite[Section 7]{evarataj} by
\begin{align*}
f:{}& \nor X \backslash \{(x,n) \in \nor X  \mid x\in L \text{ or }n\perp L^x\}\to \R^d \times S^{d-j}(L^z),\\
& f(x,n)=(x,\pi(n|L^x)), \\
g:{}& \nor X \backslash \{(x,n) \in \nor X \mid x\in L\}\to S^{d-j}(L^\perp),\\
& g(x,n)=\pi(x|L^\perp),
\end{align*}
and for $z\in S^{d-j}(L^\perp)$, define
\begin{align*}
h_z:{}&  L \times S^{d-1} \backslash \{(x,n)\in L \times S^{d-1} \mid n \perp L^z \} \to L \times S^{j-1}(L),\\ 
& h_z(x,n)=(x,\pi(n|L^z)) .
\end{align*}
It follows from \cite[Theorem 4.10 (3)]{federer2} that for  $z\in S^{d-j}(L^\perp)$ satisfying \eqref{condition}, we have
\begin{align*}
\nor^{L^z} (X \cap L^z) {}&= \{(x,\pi(n|L^z)) \mid (x,n )\in \nor X , x\in L^z\} \\
&= f\big(g^{-1}(z)\cup g^{-1}(-z)\big) \cup h_z(N),
\end{align*}
where $N = \{(x,n)\in \nor X \mid x\in L\}$ has $\Ha^{j-1}$-measure 0 by assumption. Because of condition \eqref{condition}, $h_z$ is locally Lipschitz on $N$, and hence $\Ha^{j-1}(h_z(N)) = 0$.

It was shown in \cite[Lemma 3]{evarataj} that for almost all  $z$,
\begin{equation}\label{measurezero}
\Ha^{j-1}\big((x,n)\in f(g^{-1}(z))\mid \# \big( f^{-1}(x,n)\cap g^{-1}(z)  \big) >1\big) = 0,
\end{equation}
where $\#$ denotes cardinality.
It follows from \cite[Proof of Lemma 4]{evarataj} that for almost all $z$ it holds for $\Ha^{j-1}$-almost all $(x,n) \in \nor^{L^z} (X \cap L^z)$ that there is a unique $y\in g^{-1}(z)\cup g^{-1}(-z)$  with $(x,n) = f(y)$ and
\begin{equation*}
a_{X\cap L^z}^{L^z} (x,n) = f_\sharp (\zeta(y)) / J_{j-1}(f_{\mid g^{-1}(z)\cup g^{-1}(-z)})(y),
\end{equation*}
where 
\begin{equation*}
\zeta = (a_X \llcorner  g^\sharp \Omega_{d-j} )/ J_{d-j} g
\end{equation*}
and $\Omega_{d-j}$ is an orientation form on $S^{d-j}(L^\perp)$ as in \cite[p.\ 552]{evarataj}.

Since $f$ is locally Lipschitz on $g^{-1}(z)$ for almost all $z\in S^{d-j}(L^\perp)$, 
the area and coarea formulae together with \eqref{measurezero} yield
\begin{align}\nonumber
\int_{\Li_{j-1}^d}{}& \int_{S^{d-j}(L^\perp)} \int_{\nor^{L^z} (X \cap L^z)}| \langle a_{X\cap L^z}^{L^z}, \rho_k^{L^z} \rangle |\Ha^{j-1}(\di (x,n)) \,\di z\, \di L\\ \nonumber
 {}&= 2\int_{\Li_{j-1}^d} \int_{S^{d-j}(L^\perp)}\int_{f(g^{-1}(z))}| \langle a_{X\cap L^z}^{L^z}, \rho_k^{L^z} \rangle |\Ha^{j-1}(\di (x,n))\, \di z\, \di L\\ \nonumber
&=2\int_{\Li_{j-1}^d} \int_{S^{d-j}(L^\perp)} \int_{g^{-1}(z)} | \langle \zeta , f^{\sharp} \rho_k^{L^x} \rangle |  \Ha^{j-1}(d(x,n))\, \di z\, \di L  \\ \label{finite}
&=2\int_{\Li_{j-1}^d} \int_{\nor X}  | \langle a_X ,g^\sharp \Omega_{d-j}\wedge  f^{\sharp} \rho_k^{L^x} \rangle |\,  \Ha^{d-1}(\di (x,n)) \, \di L.  
\end{align}
We must show that \eqref{finite} is finite.
In \cite[Section 7]{evarataj} it is shown that
\begin{align}\label{formvalue}
 \langle a_X, g^\sharp \Omega_{d-j} \wedge f^\sharp \rho_k^{L^x}  \rangle = {}& \frac{1}{\sigma_{j-k}} \frac{1}{|p(x|L^\perp)|^{d-j} |p(n | L^x)|^{j-k}}   \\
& \times \sum_{|I|=j-1-k} \frac{\prod_{i\in I} \kappa_i(x,n)}{\prod_{i=1}^{d-1} \sqrt{1+\kappa_i(x,n)^2}} \G(L^x, A_I(x,n))^2. \nonumber
\end{align}
Since $\G(L^x, A_I(x,n)) \leq |p(n|L^x)|$, there is a $C>0$ such that
\begin{align*}
\bigg| \frac{1 }{|p(x|L^\perp)|^{d-j}|p(n | L^x)|^{j-k}} {}&  \sum_{|I|=j-1-k} \frac{\prod_{i\in I} \kappa_i(x,n)}{\prod_{i=1}^{d-1} \sqrt{1+\kappa_i(x,n)^2}} \G(L^x, A_I(x,n))^2 \bigg| \\&\leq  \frac{C}{|p(x|L^\perp)|^{d-j}|p(n | L^x)|^{j-k-2}}. 
\end{align*}
It follows from e.g. \cite[(1)]{zahle} that $\Ha^{d-1}(\nor X)<\infty$ when $X$ is compact, so it is enough to show that 
\begin{align}\label{bound}
\int_{\Li_{j-1}^{d}}{}& |p(x|L^\perp)|^{j-d}|p(n | L^x)|^{2-j+k} \di L
\end{align}
is uniformly bounded on $\nor X$.
By assumption, there is an $\eps >0$ such that $|x|\geq \eps $ on $\nor X$. 
The $(j-1)(d-j)$-Jacobian of the map $\Li_{j-1}^d \backslash \Li_{j-1}^x \to \Li^{x^\perp}_{j-1} $ given by $L \mapsto p(L|x^\perp)$ was computed in \cite[Lemma 6]{evarataj} to be $|x|^{d-j}|p(x|L^\perp )|^{j-d}$. Letting $w=\pi(n|x^\perp)$, the coarea formula yields
\begin{align*}
\int_{\Li_{j-1}^{d}} |p(x|L^\perp)|^{j-d}|p(n | L^x)|^{2-j+k}  \di L{}&\leq \eps^{j-d} \int_{\Li_{j-1}^{x^\perp}} |p(n | L^x)|^{2-j+k}\, \di L\\
&= \eps^{j-d}\int_{\Li_{j-1}^{x^\perp}} (1-\alpha^2 +\alpha^2 |p( w | L)|^2)^{\frac{2-j+k}{2}} \,\di L,
\end{align*}
where $\alpha=\alpha(x,n)=\sin \angle (x,n)$. If $j-k \leq 2$ or $\alpha=0$, this is clearly bounded. Assume $j-k>2 $ and $\alpha \neq 0$. Using \eqref{coarea} and \eqref{sphereform}, we then have 
\begin{align*}
\int_{\Li_{j-1}^{x^\perp}}{}& |p(n | L^x)|^{2-j+k} \,\di L \\
&= c_{d-3,j-2}\int_{S^{d-2}(x^\perp)}\mathds{1}_{\{\langle u , w \rangle> 0\}}\frac{\langle u , w \rangle^{j-2} }{(1-\langle u , w \rangle^2)^{\frac{j-2}{2}}} (1-\alpha^2 +\alpha^2 \langle u , w \rangle^2)^{\frac{2-j+k}{2}}\, \di u\\
{}&= c_{d-3,j-2}\sigma_{d-2}\int_0^1 {t}^{j-2}(1-t^2)^{\frac{d-j-2}{2}}(1-\alpha^2 +\alpha^2 t^2)^{\frac{2-j+k}{2}}\, \di t\\
&\leq c_{d-3,j-2}\sigma_{d-2}\int_0^1 (1-t^2)^{\frac{d-j-2}{2}} t^{{k}} \,\di t.
\end{align*}
Since $2 < j\leq d-1$, this is finite and hence \eqref{finite} is finite.

The finiteness of \eqref{finite} allows us to apply the area and coarea to the rotational integral. A computation similar to \eqref{finite} yields 
\begin{align*}
\int_{\Li_{j}^d}{}& \Psi_{k,L}^{\psi}(X\cap L)\,  \di L \\
{}&= \frac{2}{\sigma_j}\int_{\Li_{j-1}^d} \int_{S^{d-j}(L^\perp)} \int_{f(g^{-1}(z))} \psi(L^z,x,n) \langle a_{X \cap L^z}^{L^z} , \rho_k^{L^z}\rangle \, \Ha^{j-1}(\di (x,n)) \, \di z\, \di L\\
&=\frac{2}{\sigma_j}\int_{\Li_{j-1}^d} \int_{\nor X} \psi(L^x,f(x,n)) \langle a_X , g^\sharp \Omega_{d-j} \wedge f^\sharp \rho_k^{L^x}\rangle  \, \Ha^{d-1}(\di (x,n))\, \di L.  
\end{align*}
Using again that \eqref{finite} is finite, we can apply Fubini's theorem. Inserting \eqref{formvalue}, we get 
\begin{align*}
\int_{\Li_j^d} \Psi^\psi_{k,L}(X{}&  \cap L)\, \di L= \frac{2}{\sigma_j\sigma_{j-k}}\int_{\nor X} \int_{\Li_{j-1}^d}  \frac{\psi(L^x,x,\pi(n | L^x))}{|p(x|L^\perp)|^{d-j} |p(n | L^x)|^{j-k}}   \\
& \times \sum_{|I|=j-1-k} \frac{\prod_{i\in I} \kappa_i(x,n)}{\prod_{i=1}^{d-1} \sqrt{1+\kappa_i(x,n)^2}} \G(L^x, A_I(x,n))^2  \, \di L\, \Ha^{d-1}(\di (x,n))  . 
\end{align*}
Since \eqref{bound} was finite, we can apply the coarea formula for the map $L\mapsto p(L|x^\perp)$ once again to obtain the claim of Theorem \ref{rotational}:
\begin{align*}
\int_{\Li_j^d}\Psi^\psi_{k,L}(X{}&\cap L) \, \di L= \frac{1}{\sigma_{j-k}} \int_{\nor X} \int_{\Li_{j-1}^{x^\perp}}  \frac{\psi(L^x,x,\pi(n | L^x))}{|x|^{d-j}|p(n | L^x)|^{j-k}} \\
& \times \sum_{|I|=j-1-k} \frac{\prod_{i\in I} \kappa_i(x,n)}{\prod_{i=1}^{d-1} \sqrt{1+\kappa_i(x,n)^2}} \G(L^x, A_I(x,n))^2  \, \di L \, \Ha^{d-1}(\di (x,n)). 
\end{align*}
\end{proof}

\end{document}